\newcommand*{\mailto}[1]{\href{mailto:#1}{\nolinkurl{#1}}}
\newcommand{\arxiv}[1]{\href{http://arxiv.org/abs/#1}{arXiv:#1}}
\newcommand{\bbC}{{\mathbb{C}}}
\newcommand{\bbN}{{\mathbb{N}}}
\newcommand{\bbR}{{\mathbb{R}}}
\newcommand{\bbZ}{{\mathbb{Z}}}
\newcommand{\cH}{{\mathcal H}}
\DeclareMathOperator{\supp}{supp}
\DeclareMathOperator{\dom}{dom}
\renewcommand{\Re}{\text{\rm Re}}
\renewcommand{\ln}{\text{\rm ln}}
\newcommand{\no}{\notag}
\newcommand{\lb}{\label}
\newcommand{\f}{\frac}
\newcommand{\ol}{\overline}
\newcommand{\wti}{\widetilde}
\newcommand{\Oh}{O}
\newcommand{\oh}{o}
\newcommand{\hatt}{\widehat} 
\newcommand{\dott}{\,\cdot\,}
\renewcommand{\dot}{\overset{\textbf{\Large.}}}
\renewcommand{\ddot}{\overset{\textbf{\Large..}}}
\newcommand{\bi}{\bibitem}
\def\theequation{\@arabic\c@equation}
\numberwithin{equation}{section}
\newtheorem{theorem}{Theorem}[section]
\newtheorem{lemma}[theorem]{Lemma}
\newtheorem{corollary}[theorem]{Corollary}
\theoremstyle{remark}
\newtheorem{remark}[theorem]{Remark}
\begin{document}

\title[Bessel-type operators and a refinement of Hardy's inequality]{Bessel-Type Operators and a Refinement of Hardy's Inequality}

\author[F.\ Gesztesy]{Fritz Gesztesy}
\address{Department of Mathematics, 
Baylor University, Sid Richardson Bldg., 1410 S.~4th Street, Waco, TX 76706, USA}
\email{\mailto{Fritz\_Gesztesy@baylor.edu}}
\urladdr{\url{http://www.baylor.edu/math/index.php?id=935340}}

\author[M.\ M.\ H.\ Pang]{Michael M.\ H.\ Pang}
\address{Department of Mathematics,
University of Missouri, Columbia, MO 65211, USA}
\email{\mailto{pangm@missouri.edu}}
\urladdr{\url{https://www.math.missouri.edu/people/pang}}

\author[J.\ Stanfill]{Jonathan Stanfill}
\address{Department of Mathematics, 
Baylor University, Sid Richardson Bldg., 1410 S.~4th Street, Waco, TX 76706, USA}
\email{\mailto{Jonathan\_Stanfill@baylor.edu}}
\urladdr{\url{http://sites.baylor.edu/jonathan-stanfill/}}

\dedicatory{Dedicated with great pleasure to Lance Littlejohn on the occasion of his 70th birthday.} 
\date{\today}
\thanks{Originally appeared in {\it From Operator Theory to Orthogonal Polynomials, Combinatorics, and Number Theory.~A Volume in Honor of Lance Littlejohn's 70th Birthday}, F.\ Gesztesy and A.\ Martinez-Finkelshtein (eds.), Operator Theory: Advances and Applications, Vol.~285, Birkh\"auser, Springer, Cham, 2021, pp.~143--172. This version contains some updates.}
\@namedef{subjclassname@2020}{\textup{2020} Mathematics Subject Classification}
\subjclass[2020]{Primary: 26D10, 34A40, 34B20, 34B30; Secondary: 34L10, 34B24, 47A07.}
\keywords{Hardy-type inequality, strongly singular differential operators, Friedrichs extension.}

\begin{abstract}
The principal aim of this paper is to employ Bessel-type operators in proving the inequality
\begin{align*}
\int_0^\pi dx \, |f'(x)|^2 \geq \dfrac{1}{4}\int_0^\pi dx \, \dfrac{|f(x)|^2}{\sin^2 (x)}+\dfrac{1}{4}\int_0^\pi dx \, |f(x)|^2,\quad f\in H_0^1 ((0,\pi)),
\end{align*}
where both constants $1/4$ appearing in the above inequality are optimal. In addition, this inequality is strict in the sense that equality holds if and only if $f \equiv 0$. This inequality is derived with the help of the exactly solvable, strongly singular, Dirichlet-type Schr\"{o}dinger operator associated with the differential expression
\begin{align*}
\tau_s=-\dfrac{d^2}{dx^2}+\dfrac{s^2-(1/4)}{\sin^2 (x)}, \quad s \in [0,\infty), \; x \in (0,\pi).
\end{align*}

The new inequality represents a refinement of Hardy's classical inequality
\begin{align*}  
\int_0^\pi dx \, |f'(x)|^2 \geq \dfrac{1}{4}\int_0^\pi dx \, \dfrac{|f(x)|^2}{x^2}, \quad f\in H_0^1 ((0,\pi)), 
\end{align*}
it also improves upon one of its well-known extensions in the form 
\begin{align*}
\int_0^\pi dx \, |f'(x)|^2 \geq \dfrac{1}{4}\int_0^\pi dx \, \dfrac{|f(x)|^2}{d_{(0,\pi)}(x)^2}, 
\quad f\in H_0^1 ((0,\pi)),
\end{align*}
where $d_{(0,\pi)}(x)$ represents the distance from $x \in (0,\pi)$ to the boundary $\{0,\pi\}$ of $(0,\pi)$.
\end{abstract}

\maketitle

{\scriptsize{\tableofcontents}}
\normalsize

\section{Introduction} \lb{s1}

{\it Happy Birthday, Lance! We hope this modest contribution to Hardy-type inequalities will cause some joy.}

In a nutshell, the aim of this note is to employ a Bessel-type operator in deriving the Hardy-type inequality (see 
the footnote on p.~7),  
\begin{align}
\int_0^\pi dx \, |f'(x)|^2 \geq \dfrac{1}{4}\int_0^\pi dx \, \dfrac{|f(x)|^2}{\sin^2 (x)} 
+\dfrac{1}{4}\int_0^\pi dx \, |f(x)|^2,\quad f\in H_0^1 ((0,\pi)).     \lb{1.1} 
\end{align}

As is readily verified, \eqref{1.1} indeed represents an improvement over the classical Hardy inequality
\begin{align} \lb{1.2}
\int_0^\pi dx \, |f'(x)|^2 \geq \dfrac{1}{4}\int_0^\pi dx \, \dfrac{|f(x)|^2}{x^2}, \quad f\in H_0^1 ((0,\pi)), 
\end{align}
while also improving upon one of its well-known extensions in the form
\begin{align} \lb{1.3}
\int_0^\pi dx \, |f'(x)|^2 \geq \dfrac{1}{4}\int_0^\pi dx \, \dfrac{|f(x)|^2}{d_{(0,\pi)}(x)^2},\quad f\in H_0^1 ((0,\pi)).
\end{align}
Here $d_{(0,\pi)}(x)$ represents the distance from $x \in (0,\pi)$ to the boundary $\{0,\pi\}$ of the interval $(0,\pi)$, that is, 
\begin{align} \lb{1.4}
d_{(0,\pi)}(x)=\begin{cases} x, & x\in(0,\pi/2],\\
\pi-x, & x\in[\pi/2,\pi). 
\end{cases}
\end{align}
We emphasize that all constants $1/4$ in \eqref{1.1}--\eqref{1.3} are optimal and all inequalities are strict in the sense that equality holds in them if and only if $f \equiv 0$.

Our refinement \eqref{1.1} (and the optimality of both constants $1/4$ in \eqref{1.1}) rests on the exact solvability of the one-dimensional Schr\"odinger equation with potential $q_s$, $s \in [0,\infty)$, given by 
\begin{equation}
q_s(x) = \f{s^2 - (1/4)}{\sin^2(x)}, \quad x \in (0,\pi),    \lb{1.5} 
\end{equation}
as illustrated by Rosen and Morse \cite{RM32} in 1932, P\"oschl and Teller \cite{PT33} in 1933, and Lotmar \cite{Lo35} in 1935. These authors are either concerned with the following extension of \eqref{1.5} 
\begin{equation} 
\f{c_1}{\sin^2(x)} + \f{c_2}{\cos^2(x)}, \quad x \in (0,\pi/2),    \lb{1.6}
\end{equation}
or its hyperbolic analog of the form
\begin{equation} 
\f{c_1}{\sinh^2(x)} + \f{c_2}{\cosh^2(x)}, \quad x \in \bbR \; (\text{or } \, x \in (0,\infty)).    \lb{1.7} 
\end{equation}
The upshot of these investigations for the purpose at hand was the realization that such problems are exactly solvable in terms of the hypergeometric function $F(a,b;c;\dott)$ (frequently denoted by $\mathstrut_2F_1(a,b;c;\dott)$). These types of problems are further discussed by Infeld and Hull \cite{IH51} and summarized in  \cite[Sect.~38, 39, 93]{Fl99}, and more recently in  \cite{DW11}. A discussion of the underlying singular periodic problem \eqref{1.5} on $\bbR$, including the associated Floquet (Bloch) theory, was presented by Scarf \cite{Sc58}. These investigations exclusively focus on aspects of ordinary differential equations as opposed to operator theory even though Dirichlet problems associated with singular endpoints were formally discussed (in this context see also \cite{Sc58a}). The operator theoretic approach to \eqref{1.5} and \eqref{1.6} over a finite interval bounded by singularities, and a variety of associated self-adjoint boundary conditions including coupled boundary conditions leading to energy bands 
(Floquet--Bloch theory) in the periodic problem on $\bbR$, on the basis of generalized boundary values due to Rellich \cite{Re43} (see also \cite{BG85}), was first discussed in \cite{GK85} and \cite{GMS85}. Finally, we briefly mention that the case of $n$-soliton potentials  
\begin{equation} 
q_{(1/2)+n}(x) = n(n+1)/\cosh^2(x), \quad n \in \bbN, \; x \in \bbR, 
\end{equation} 
has received special attention as it represents a solution of infinitely many equations in the stationary Korteweg--de Vries (KdV) hierarchy (starting from level $n$ upward). 

Introducing the differential expression 
\begin{equation}
\tau_s = - \f{d^2}{dx^2} + q_s(x) = - \f{d^2}{dx^2} + \f{s^2 - (1/4)}{\sin^2(x)}, \quad x \in (0,\pi), 
\end{equation}
the exact solvability of the differential equation $\tau_s y = z y$, $z \in \bbC$, or a comparison with the well-known 
Bessel operator case $- (d^2/dx^2) + \big[s^2 - (1/4)\big] x^{-2}$ near $x = 0$ and 
$- (d^2/dx^2) + \big[s^2 - (1/4)\big] (x - \pi)^{-2}$ near $x = \pi$ then yields the nonoscillatory property of $\tau_s$ 
if and only if $s \in [0,\infty)$. Very roughly speaking, nonnegativity of the Friedrichs extension associated with the differential expression $\tau_0 - (1/4)$, implying nonnegativity of the underlying quadratic form defined on $H_0^1 ((0,\pi))$, implies the refinement \eqref{1.1} of Hardy's inequality. 

In Section \ref{s2} we briefly discuss (principal and nonprincipal) solutions of the exactly solvable Schr\"odinger equation $\tau_s y = 0$ (solutions of the general equation $\tau_s y = z y$, $z \in \bbC$, are discussed in Appendix \ref{sA}), and introduce minimal $T_{s,min}$ and maximal $T_{s,max} = T_{s,min}^*$ operators corresponding to $\tau_s$ as well as the Friedrichs extension $T_{s,F}$ of $T_{s,min}$ and the boundary values associated with $T_{s,max}$, following recent treatments in \cite{GLN20}, \cite{GLPS21}. Section \ref{s3} contains the bulk of this paper and is devoted to a derivation of inequality \eqref{1.1}. We also indicate how two related results by Avkhadiev and Wirths \cite{AW07}, \cite{AW11}, involving Drichlet boundary conditions on both ends and a mixture of Dirichlet and Neumann boundary conditions naturally fits into the framework discussed in this paper. In Appendix \ref{sA} we study solutions of $\tau_s y = z y$, $z \in \bbC$, in more detail and also derive the singular Weyl--Titchmarsh--Kodaira $m$-function associated with $T_{s,F}$. Finally, Appendix \ref{sB} collects some facts on Hardy-type inequalities.

\section{An Exactly Solvable, Strongly Singular, \\ Periodic Schr\"odinger Operator}\lb{s2}

In this section we examine a slight variation of the example found in Section 4 of \cite{GK85} by implementing the methods found in \cite{GLN20}.

Let $a=0$, $b=\pi,$
\begin{equation}
p(x)=r(x)=1,\quad q_s(x)=\f{s^2-(1/4)}{\sin^2 (x)},\quad s \in [0,\infty), \; x\in(0,\pi).    \lb{2.1}
\end{equation}
We now study the Sturm--Liouville operators associated with the corresponding differential expression given by
\begin{align}
\tau_s = - \dfrac{d^2}{dx^2} + q_s(x) 
= - \dfrac{d^2}{dx^2}+\dfrac{s^2-(1/4)}{\sin^2 (x)}, \quad s \in [0,\infty), \; x\in(0,\pi),    \lb{2.2} 
\end{align}
which is in the limit circle case at the endpoints $x=0,\pi$ for $s\in[0,1)$, and limit point at both endpoints for 
$s \in [1,\infty)$. The maximal and preminimal operators, $T_{s,max}$ and $\dot T_{s,min}$, associated to $\tau_{s}$ 
in $L^2((0,\pi); dx)$ are then given by  
\begin{align}
&T_{s,max} f = \tau_{s} f, \quad s \in [0,\infty),     \no \\
& f \in \dom(T_{s,max})=\big\{g\in L^2((0,\pi); dx) \, \big| \,  g,g'\in AC_{loc}((0,\pi));     \lb{2.3} \\
& \hspace*{6.35cm}  \tau_{s} g\in L^2((0,\pi); dx)\big\},     \no 
\end{align}
and
\begin{align}
&\dot T_{s,min} f = \tau_{s} f, \quad s \in [0,\infty),   \no
\\
& f \in \dom\big(\dot T_{s,min}\big)=\big\{g\in L^2((0,\pi); dx)  \, \big| \,  g,g'\in AC_{loc}((0,\pi));   
\lb{2.4} \\ 
&\hspace*{1.9cm} \supp \, (g)\subset(0,\pi) \text{ is compact; } 
\tau_{s} g\in L^2((0,\pi); dx)\big\}.   \no 
\end{align}
Since 
\begin{equation}
q_s \in L^2_{loc}((0,\pi); dx), \quad s \in [0,\infty),     \lb{2.5} 
\end{equation}
one can replace $\dot T_{s,min}$ by
\begin{equation}
\ddot T_{s,min} = \tau_{s}\big|_{C_0^{\infty}((0,\pi))}, \quad s \in [0,\infty).  \lb{2.6} 
\end{equation}

For $s\in[0,1)$, we introduce principal and nonprincipal solutions $u_{0,s}(0,\dott)$ and $\hatt  u_{0,s}(0,\dott)$ of $\tau_s u=0$ at $x=0$ by
\begin{align}
u_{0,s}(0,x)&=[\sin (x)]^{(1+2s)/2}F\big((1/4)+(s/2),(1/4)+(s/2);1+s;\sin^2 (x)\big),   \no \\
& \hspace*{8.35cm}  s\in[0,1),\no  \\[1mm]
\hatt  u_{0,s}(0,x)&=\begin{cases}
(2s)^{-1}[\sin (x)]^{(1-2s)/2}    \\
\quad \times F\big((1/4)-(s/2),(1/4)-(s/2);1-s;\sin^2 (x)\big),  \quad s\in(0,1),     \\[1mm]
[\sin (x)]^{1/2}F\big(1/4,1/4;1;\sin^2 (x)\big)       \\
\quad \times\displaystyle\int_x^{c} dx' \, [\sin (x')]^{-1}\big[F\big(1/4,1/4;1;\sin^2 (x') \big)\big]^{-2}, \quad s=0,
\end{cases}    \lb{2.7} 
\end{align}
and principal and nonprincipal solutions $u_{\pi,s}(0,\dott)$ and $\hatt  u_{\pi,s}(0,\dott)$ of $\tau_s u=0$ at $x=\pi$ by
\begin{align}
u_{\pi,s}(0,x)&=[\sin (x)]^{(1+2s)/2}F\big((1/4)+(s/2),(1/4)+(s/2);1+s;\sin^2 (x)\big),   \no \\ 
& \hspace*{8.3cm} s\in[0,1),\no \\[1mm]
\hatt  u_{\pi,s}(0,x)&=\begin{cases}
-(2s)^{-1} [\sin (x)]^{(1-2s)/2}    \\ 
\quad \times F\big((1/4)-(s/2),(1/4)-(s/2);1-s;\sin^2 (x)\big), \quad s\in(0,1),\\[1mm]
- [\sin (x)]^{1/2}F\big(1/4,1/4;1;\sin^2 (x)\big)\\
\quad\times\displaystyle\int_c^x dx' \, [\sin (x')]^{-1}\big[F\big(1/4,1/4;1;\sin^2 (x') \big)\big]^{-2}, \quad s=0.
\end{cases}       \lb{2.8} 
\end{align}
Here $F(\dott,\dott,\dott;\dott)$ $($frequently written as $\mathstrut_2F_1(\dott,\dott;\dott;\dott)$$)$ denotes the hypergeometric function $($see, e.g., \cite[Ch.~15]{AS72}$)$. 

\begin{remark} \lb{r2.1} 
We note that the case $c=1$ in $F(a,b;c;\xi)$, corresponding to the case $s=0$ in \eqref{2.7}, \eqref{2.8}, is a  special one in the sense that linearly independent solutions of the hypergeometric differential equation are then of the form (see, e.g., \cite[Nos.~15.5.16, 15.5.17]{AS72})
\begin{align}
& y_1(\xi) = F(a,b;1;\xi),     \no \\
& y_2(\xi) =  F(a,b;1;\xi) \ln(\xi)    \lb{2.8a} \\
& \quad + \sum_{n\in \bbN} \f{(a)_n (b)_n}{(n!)^2} [\psi(a+n) - \psi(a) + \psi(b+n) - \psi(b) 
+ 2 \psi(1) - 2 \psi(n+1)] \xi^n.    \no
\end{align}
Here $(d)_n$, $n \in \bbN_0$, represents Pochhammer's symbol (see, \eqref{A.5}), and $\psi(\dott)$ denotes the Digamma function. Since we wanted to ensure \eqref{2.11} and our principal aim in connection with the boundary values \eqref{2.13}--\eqref{2.16} was the derivation of the asymptotic relations \eqref{2.9} and \eqref{2.10}, our choice of $\hatt u_{0,0}$ and $\hatt u_{\pi,0}$ in \eqref{2.7} and \eqref{2.8} is to be preferred over the use of the pair of functions in \eqref{2.8a}. For more details in this connection see Appendix \ref{sA}.  
\hfill $\diamond$
\end{remark}

Since
\begin{align}
u_{0,s}(0,x)&\underset{x\downarrow0}{=}x^{(1+2s)/2}\big\{1+\big[\big(4s^2-1\big)/\big(48+48s\big)\big]x^2+\Oh\big(x^4\big)\big\},    \quad s\in[0,1),   \no \\[1mm]
\hatt  u_{0,s}(0,x)&\underset{x\downarrow0}{=}\begin{cases}
(2s)^{-1}x^{(1-2s)/2}\big\{1+\big[\big(4s^2-1\big)/\big(48-48s\big)\big]x^2+\Oh\big(x^4\big)\big\}, \\
\hspace*{7.23cm}  s\in(0,1),\\[1mm]
\ln(1/x)x^{1/2}\big\{1+\big[\big([\ln(x)]^{-1}-1\big)/48\big]x^2+\Oh\big(x^4\big)\big\}, \quad s=0,
\end{cases}      \lb{2.9} \\[1mm]
u_{\pi,s}(0,x)&\underset{x\uparrow\pi}{=}(\pi-x)^{(1+2s)/2}\big\{1+\big[\big(4s^2-1\big)/\big(48+48s\big)\big](\pi-x)^2     \no \\
& \hspace*{3.1cm} +\Oh\big((\pi-x)^4\big)\big\},   \quad  s\in[0,1),\no \\[1mm]
\hatt  u_{\pi,s}(0,x)&\underset{x\uparrow\pi}{=}\begin{cases}
-(2s)^{-1}(\pi-x)^{(1-2s)/2}\big\{1+\big[\big(4s^2-1\big)/\big(48-48s\big)\big](\pi-x)^2\\
\quad+\Oh\big((\pi-x)^4\big)\big\},  \quad  s\in(0,1),\\[1mm]
\ln(\pi-x)(\pi-x)^{1/2}\big\{1+\big[\big([\ln(\pi-x)]^{-1}-1\big)/48\big](\pi-x)^2\\
\quad +\Oh\big((\pi-x)^4\big)\big\},   \quad s=0,
\end{cases}       \lb{2.10} 
\end{align}
one deduces that
\begin{align}
W(\hatt  u_{0,s}(0,\dott),u_{0,s}(0,\dott))(0)=1=W(\hatt  u_{\pi,s}(0,\dott),u_{\pi,s}(0,\dott))(\pi),\quad s\in[0,1),     \lb{2.11} 
\end{align}
and
\begin{align}
\lim_{x\downarrow0}\dfrac{u_{0,s}(0,x)}{\hatt  u_{0,s}(0,x)}=0,\quad \lim_{x\uparrow\pi}\dfrac{u_{\pi,s}(0,x)}{\hatt  u_{\pi,s}(0,x)}=0,\quad s\in[0,1).      \lb{2.12} 
\end{align}

The generalized boundary values for $g\in\dom(T_{s,max})$ (the maximal operator associated with $\tau_s$) are then of the form
\begin{align}
\wti g(0)&=\begin{cases}
\lim_{x\downarrow0}g(x)/\big[(2s)^{-1}x^{(1-2s)/2}\big], & s\in(0,1),\\[1mm]
\lim_{x\downarrow0}g(x)/\big[x^{1/2}\ln(1/x)\big], & s=0,
\end{cases}    \lb{2.13} \\[1mm]
\wti g^{\, \prime}(0)&=\begin{cases}
\lim_{x\downarrow0}\big[g(x)-\wti g(0)(2s)^{-1}x^{(1-2s)/2}\big]\big/x^{(1+2s)/2}, & s\in(0,1),\\[1mm]
\lim_{x\downarrow0}\big[g(x)-\wti g(0)x^{1/2}\ln(1/x)\big]\big/x^{1/2}, & s=0,
\end{cases}    \lb{2.14} \\[1mm]
\wti g(\pi)&=\begin{cases}
\lim_{x\uparrow\pi}g(x)/\big[-(2s)^{-1}(\pi-x)^{(1-2s)/2}\big], & s\in(0,1),\\[1mm]
\lim_{x\uparrow\pi}g(x)/\big[(\pi-x)^{1/2}\ln(\pi-x)\big], & s=0,
\end{cases}     \lb{2.15} \\[1mm]
\wti g^{\, \prime}(\pi)&=\begin{cases}
\lim_{x\uparrow\pi}\big[g(x)+\wti g(\pi)(2s)^{-1}(\pi-x)^{(1-2s)/2}\big]\big/(\pi-x)^{(1+2s)/2}, & s\in(0,1),\\[1mm]
\lim_{x\uparrow\pi}\big[g(x)-\wti g(0)(\pi-x)^{1/2}\ln(\pi-x)\big]\big/(\pi-x)^{1/2}, & s=0.
\end{cases}    \lb{2.16} 
\end{align}
As a result, the minimal operator 
$T_{s,min}$ associated to $\tau_{s}$, that is, 
\begin{equation}
T_{s,min} = \ol{\dot T_{s,min}} = \ol{\ddot T_{s,min}}, \quad s \in [0,\infty),    \lb{2.17} 
\end{equation} 
is thus given by 
\begin{align}
&T_{s,min} f = \tau_{s} f,     \no \\
& f \in \dom(T_{s,min})= \big\{g\in L^2((0,\pi); dx) \, \big| \, g,g'\in AC_{loc}((0,\pi));    \lb{2.18} \\ 
& \hspace*{1.1cm}  \wti g(0) = {\wti g}^{\, \prime}(0) = \wti g(\pi) = {\wti g}^{\, \prime}(\pi) = 0; \, 
\tau_{s} g\in L^2((0,\pi); dx)\big\},  \quad s \in [0,1),     \no 
\end{align}
and satisfies $T_{s,min}^* = T_{s,max}$, $T_{s,max}^* = T_{s,min}$, $s \in [0,\infty)$. 
Due to the limit point property of $\tau_s$ at $x=0$ and $x=\pi$ if and only if $s \in [1,\infty)$, one concludes that 
\begin{equation}
T_{s,min} = T_{s,max} \, \text{ if and only if } \, s \in [1,\infty).   \lb{2.19}
\end{equation}

The Friedrichs extension $T_{s,F}$ of $T_{s,min}$, $s \in [0,1)$, permits a particularly simple characterization in terms of the generalized boundary conditions \eqref{2.13}--\eqref{2.16} and is then given by (cf.\ \cite{Ka78}, \cite{NZ92}, \cite{Re51}, \cite{Ro85} and the extensive literature cited in \cite{GLN20}, \cite[Ch.~13]{GZ21})  
\begin{align}
T_{s,F}f=\tau_s f,\quad f\in\dom(T_{s,F})=\big\{g\in\dom(T_{s,max}) \, \big| \,  \wti g(0)=\wti g(\pi)=0\big\}, 
\quad s \in [0,1),      \lb{2.20} 
\end{align}
moreover,
\begin{equation}
T_{s,F} = T_{s,min} = T_{s,max}, \quad s \in [1,\infty),     \lb{2.21} 
\end{equation}
is self-adjoint (resp., $\dot T_{s,min}$ and $\ddot T_{s,min}$, $s \in [1,\infty)$, are essentially self-adjoint) in $L^2((0,\pi); dx)$. In this case the Friedrichs boundary conditions in \eqref{2.20} are automatically satisfied and hence can be omitted. 

By \eqref{A.13} one has 
\begin{equation}
\inf(\sigma(T_{s,F})) = [(1/2) + s]^2, \quad s \in [0,\infty),     \lb{2.22}
\end{equation}
in particular, 
\begin{equation}
T_{s,F} \geq [(1/2) + s]^2 I_{(0,\pi)}, \quad s \in [0,\infty),     \lb{2.23} 
\end{equation}
with $I_{(0,\pi)}$ abbreviating the identity operator in $L^2((0,\pi); dx)$. 

All results on 2nd order differential operators employed in this section can be found in classical sources such as \cite[Sect.~129]{AG81}, \cite[Chs.~8, 9]{CL85}, \cite[Sects.~13.6, 13.9, 13.10]{DS88}, \cite[Ch.~III]{JR76}, \cite[Ch.~V]{Na68}, \cite{NZ92}, \cite[Ch.~6]{Pe88}, \cite[Ch.~9]{Te14}, \cite[Sect.~8.3]{We80}, \cite[Ch.~13]{We03}, \cite[Chs.~4, 6--8]{Ze05}. In addition, \cite{GLN20} and \cite[Ch.~13]{GZ21} contain very detailed lists of references in this context.

\section{A Refinement of Hardy's Inequality}\lb{s3}

 The principal purpose of this section is to derive a refinement of the classical Hardy inequality
\begin{align} \lb{3.1}
\int_0^\pi dx \, |f'(x)|^2 \geq \dfrac{1}{4}\int_0^\pi dx \, \dfrac{|f(x)|^2}{x^2}, \quad f\in H_0^1 ((0,\pi)), 
\end{align}
as well as of one of its well-known extensions in the form
\begin{align}\lb{3.2}
\int_0^\pi dx \, |f'(x)|^2 \geq \dfrac{1}{4}\int_0^\pi dx \, \dfrac{|f(x)|^2}{d_{(0,\pi)}(x)^2},\quad f\in H_0^1 ((0,\pi)),
\end{align}
where $d_{(0,\pi)}(x)$ represents the distance from $x \in (0,\pi)$ to the boundary $\{0,\pi\}$ of the interval $(0,\pi)$, that is, 
\begin{align}\lb{3.3}
d_{(0,\pi)}(x)=\begin{cases} x, & x\in(0,\pi/2],\\
\pi-x, & x\in[\pi/2,\pi). 
\end{cases}
\end{align}
The constant $1/4$ in \eqref{3.1} and \eqref{3.2} is known to be optimal and both inequalities are strict in the sense that equality holds in them if and only if $f \equiv 0$.

For background on Hardy-type inequalities we refer, for instance, to \cite[p.~3--5]{BEL15}, \cite{BM97}, \cite{CEL99}, \cite[p.~104--105]{Da95}, \cite{GLMW18, GGMR19, Ha25}, 
\cite[Sect.~7.3, p.~240--243]{HLP88}, \cite[Sect.~5]{Ku85}, \cite[Ch.~3]{KMP07}, \cite[Ch.~1]{KPS17}, \cite{La26}, \cite[Ch.~1]{OK90}, \cite{PS15}. 

The principal result of this section then can be formulated as follows\footnote{After our paper got published, we received a message from Professor Avkhadiev who kindly pointed out to us that he proved Theorem \ref{t3.1} in 2015, see \cite[Lemma~1]{Av15}.}. 

\begin{theorem} \lb{t3.1} 
Let $f\in H_0^1 ((0,\pi))$. Then,
\begin{equation} \lb{3.4}
\int_0^\pi dx \, |f'(x)|^2 \geq \dfrac{1}{4}\int_0^\pi dx \, \dfrac{|f(x)|^2}{\sin^2 (x)}+\dfrac{1}{4}\int_0^\pi dx \, |f(x)|^2,
\end{equation}
where both constants $1/4$ in \eqref{3.4} are optimal. In addition, the inequality is strict in the sense that equality holds in \eqref{3.4} if and only if $f \equiv 0$. 
\end{theorem}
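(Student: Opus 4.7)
My plan is to recognize \eqref{3.4} as the operator inequality $T_{0,F} \geq (1/4) I_{(0,\pi)}$ from \eqref{2.23} written at the level of the quadratic form, so that the inequality itself reduces to the sharp spectral bound $\inf \sigma(T_{0,F}) = 1/4$ from \eqref{2.22}. The first step is to verify that $H_0^1((0,\pi)) \subset \dom\bigl(T_{0,F}^{1/2}\bigr)$ and that on this subspace the Friedrichs form is represented by the explicit integral
\[
\mathfrak t_0[f] = \int_0^\pi |f'(x)|^2 dx - \frac{1}{4}\int_0^\pi \frac{|f(x)|^2}{\sin^2(x)} dx.
\]
On $C_0^\infty((0,\pi))$ this is just integration by parts. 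To extend to $H_0^1$ I would combine the geometric comparison $1/\sin^2(x) \leq (\pi/2)^2 / d_{(0,\pi)}(x)^2$ with the classical inequality \eqref{3.2} to bound $\int_0^\pi |f|^2/\sin^2(x)\, dx$ by a constant multiple of $\int_0^\pi |f'|^2 dx$ on $H_0^1$; this makes the right-hand side of the display well-defined and continuous in the $H^1$ norm. Since $\mathfrak t_0$ itself is dominated by the $H^1$ norm on $C_0^\infty$, $C_0^\infty$ approximants of $f \in H_0^1$ in $H^1$ are Cauchy in the form norm and their limits agree with the explicit integral. With this identification in place, \eqref{2.23} at $s=0$ immediately yields $\mathfrak t_0[f] \geq (1/4)\|f\|_{L^2}^2$, i.e., \eqref{3.4}.

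The optimality of the constant $1/4$ attached to $|f|^2/\sin^2(x)$ follows by localization: if any $c > 1/4$ were admissible there (with any constant in front of $\int|f|^2$), then restricting to $f \in C_0^\infty((0,\varepsilon))$, using $\sin(x) \leq x$ to pass from $1/\sin^2(x)$ to $1/x^2$, and rescaling $x \mapsto \lambda x$ with $\lambda \downarrow 0$ (so the zeroth-order term becomes negligible relative to the Hardy term) would contradict the sharpness of the classical Hardy constant in \eqref{3.1}. For the optimality of the constant $1/4$ in front of $\int|f|^2$, it suffices to exhibit a minimizing sequence. A direct substitution shows that $\psi(x) := [\sin(x)]^{1/2}$ satisfies $\tau_0 \psi = (1/4)\psi$ and lies in $L^2((0,\pi))$; since $\psi(x) \sim x^{1/2}$ at $x=0$ and $\psi(x) \sim (\pi-x)^{1/2}$ at $x=\pi$, the definitions \eqref{2.13} and \eqref{2.15} give $\wti\psi(0) = \wti\psi(\pi) = 0$, so $\psi \in \dom(T_{0,F})$ is the ground state. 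Cutoffs $f_n := \eta_n \psi$ with $\eta_n \in C_0^\infty((0,\pi))$ equal to $1$ on $[1/n,\pi-1/n]$ then belong to $H_0^1$, and a short boundary-layer estimate shows $\mathfrak t_0[f_n]/\|f_n\|_{L^2}^2 \to 1/4$.

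Strictness reduces to uniqueness of the ground state. If equality held in \eqref{3.4} for some $f \in H_0^1$, then $\mathfrak t_0[f] = (1/4)\|f\|^2$ and the spectral theorem would place $f \in \ker(T_{0,F} - (1/4) I)$; this kernel is one-dimensional and spanned by $\psi$ (the second linearly independent $L^2$-solution of $\tau_0 y = (1/4) y$, obtained by reduction of order as $(\sin x)^{1/2} \ln(\tan(x/2))$, satisfies $\wti y(0) = -1 \neq 0$ and is excluded by the Friedrichs condition), so $f$ must be a scalar multiple of $\psi$. But $\psi'(x) \sim (1/2) x^{-1/2}$ as $x \downarrow 0$ forces $\psi \notin H^1((0,\pi))$, so only $f \equiv 0$ is compatible with $f \in H_0^1$. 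The principal obstacle in the argument is the form identification in the first paragraph: the form domain of $T_{0,F}$ is in fact strictly larger than $H_0^1$ (it contains $\psi$ itself), so one cannot just invoke $\dom(T_{0,F}^{1/2}) = H_0^1$ and must verify the inclusion and the explicit representation of $\mathfrak t_0$ on $H_0^1$ by hand.
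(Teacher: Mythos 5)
Your proposal is correct in overall strategy and reaches all four conclusions of the theorem, but it takes a genuinely different route from the paper's on three of them, and one step needs repair (see below). For the inequality itself, the paper extends the $C_0^\infty((0,\pi))$ case to $H_0^1((0,\pi))$ by passing to a pointwise a.e.\ convergent subsequence and applying Fatou's lemma, whereas you prove the quantitative bound $\int_0^\pi dx\,|f|^2/\sin^2(x)\le \pi^2\int_0^\pi dx\,|f'|^2$ (via $\sin(x)\ge (2/\pi)d_{(0,\pi)}(x)$ and \eqref{3.2}) and conclude by $H^1$-continuity of both sides; your version costs a little more but buys the explicit identification of the Friedrichs form on $H_0^1((0,\pi))$, which the paper instead imports from the literature when it needs $H_0^1((0,\pi))\subseteq\dom\big(T_{0,F}^{1/2}\big)$ in the strictness argument, and your closing caveat that $\dom\big(T_{0,F}^{1/2}\big)\supsetneqq H_0^1((0,\pi))$ at $s=0$ is exactly the point the paper is careful about. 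For optimality of the first constant $1/4$, the paper substitutes $s\mapsto is$ and invokes oscillation theory ($T_{s,min}$ bounded from below if and only if $s\in[0,\infty)$); your localization-and-rescaling argument, which makes the zeroth-order term negligible and reduces matters to the sharpness of the classical Hardy constant in \eqref{3.1}, is more elementary and equally valid. For the second constant, the paper argues by contradiction from the fact that $T_{0,min}$ and its Friedrichs extension share the same lower bound together with $\inf\sigma(T_{0,F})=1/4$ from \eqref{2.22}, while you construct an explicit minimizing sequence from the ground state $[\sin(x)]^{1/2}$. The strictness argument is essentially the paper's: equality forces $f$ into the simple ground-state eigenspace, and $\psi'(x)\sim \tfrac12 x^{-1/2}$ excludes $\psi=[\sin(x)]^{1/2}$ from $H^1((0,\pi))$; your identification of the second solution $[\sin(x)]^{1/2}\ln(\tan(x/2))$ and its nonvanishing boundary value is consistent with \eqref{2.7} and \eqref{2.13}.

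The step that needs repair is the ``short boundary-layer estimate'' for the minimizing sequence. Using the factorization \eqref{B.3} at $s=0$ (or a direct integration by parts against $\tau_0\psi=(1/4)\psi$) one finds
\[
\mathfrak t_0[\eta_n\psi]-\tfrac14\,\|\eta_n\psi\|_{L^2((0,\pi);dx)}^2=\int_0^\pi dx\,|\eta_n'(x)|^2\sin(x),
\]
and for a cutoff with a linear (or any fixed-profile) ramp on an interval of length comparable to its distance to the endpoint this integral is of order $n^2\cdot n^{-1}\cdot n^{-1}=O(1)$ and does \emph{not} tend to $0$; indeed, by Cauchy--Schwarz, any ramp from $0$ at $x=a_n$ to $1$ at $x=1/n$ satisfies $\int_{a_n}^{1/n}dx\,|\eta_n'(x)|^2 x\ge [\ln(1/(na_n))]^{-1}$, so the exponent $1/2$ is exactly borderline. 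The standard fix is a logarithmic cutoff, e.g.\ $\eta_n(x)=\ln(n^2x)/\ln(n)$ on $[1/n^2,1/n]$, for which $\int_{1/n^2}^{1/n}dx\,|\eta_n'(x)|^2\sin(x)\le (\ln n)^{-1}\to 0$. With that choice (and its mirror image at $x=\pi$) your minimizing sequence does the job and the rest of your argument stands.
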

\begin{proof}
By Section \ref{s2} for $s=0$ and by \cite[Sect. 4]{GK85} for $s \in (0,\infty)$, one has
\begin{align} \lb{3.5}
\bigg(-\dfrac{d^2}{dx^2}+\dfrac{s^2-(1/4)}{\sin^2 (x)} - [(1/2) + s]^2 I_{(0,\pi)}
\bigg)\bigg|_{C_0^\infty ((0,\pi))}\geq 0,\quad s \in [0,\infty).
\end{align}
Thus, setting $s=0$ in \eqref{3.5} yields 
\begin{align}\lb{3.6}
\int_0^\pi dx \, |f'(x)|^2 \geq \dfrac{1}{4}\int_0^\pi dx \, \dfrac{|f(x)|^2}{\sin^2 (x)}+\dfrac{1}{4}\int_0^\pi dx \, |f(x)|^2,\quad f\in C_0^\infty ((0,\pi)).
\end{align}

Now denote by $H_0^1 ((0,\pi))$ the standard Sobolev space on $(0,\pi)$ obtained upon completion of $C_0^\infty ((0,\pi))$ in the norm of $H^1 ((0,\pi))$. Since $C_0^\infty ((0,\pi))$ is dense in $H_0^1 ((0,\pi))$, given $f\in H_0^1 ((0,\pi))$, there exists a sequence $\{f_n\}_{n\in\bbN}\subset C_0^\infty ((0,\pi))$ such that $\lim_{n\to\infty}\| f_n-f\|^2_{H_0^1 ((0,\pi))}=0$. Hence, one can find a subsequence $\{f_{n_p}\}_{p\in\bbN}$ of $\{f_n\}_{n\in\bbN}$ such that $f_{n_p}$ converges to $f$ pointwise almost everywhere on $(0,\pi)$ as $p \to \infty$. Thus an application of Fatou's lemma (cf., e.g., \cite[Corollary 2.19]{Fo99}) yields that \eqref{3.6} extends to $f\in H_0^1 ((0,\pi))$, namely,
\begin{align}
& \dfrac{1}{4}\int_0^\pi dx \, \dfrac{|f(x)|^2}{\sin^2 (x)}+\dfrac{1}{4}\int_0^\pi dx \, |f(x)|^2 \no \\
& \quad \leq \liminf_{p\to\infty}\dfrac{1}{4}\int_0^\pi dx \, \dfrac{|f_{n_p}(x)|^2}{\sin^2 (x)}+\liminf_{p\to\infty}\dfrac{1}{4}\int_0^\pi dx \, |f_{n_p}(x)|^2 \quad \text{(by Fatou's lemma)} \no \\
& \quad \leq \liminf_{p\to\infty}\bigg\{\dfrac{1}{4}\int_0^\pi dx \, \dfrac{|f_{n_p}(x)|^2}{\sin^2 (x)}+\dfrac{1}{4}\int_0^\pi dx \, |f_{n_p}(x)|^2 \bigg\} \no \\
&\quad \leq \liminf_{p\to\infty}\int_0^\pi dx \, |f'_{n_p}(x)|^2 \quad \text{(by \eqref{3.6})} \no \\
& \quad =\lim_{p\to\infty}\int_0^\pi dx \, |f'_{n_p}(x)|^2 \no \\
& \quad =\int_0^\pi dx \, |f'(x)|^2.
\end{align}

The substitution $s\mapsto is$ in \eqref{2.7} results in solutions that have oscillatory behavior due to the 
factor $[\sin (x)]^{\pm is}$ in \eqref{A.3}, \eqref{A.4}, rendering all solutions of $\tau_s y(\lambda,\dott) = \lambda y(\lambda,\dott)$ oscillatory for each $\lambda \in \bbR$ if and only if $s^2<0$. Classical  oscillation theory results (see, e.g.,  \cite[Theorem 4.2]{GLN20}) prove that $\dot T_{s,min}$, and hence $T_{s,min}$ are bounded from below if and only if $s \in [0,\infty)$. This proves that the first constant $1/4$ on the right-hand side of \eqref{3.4} is optimal. 

Next we demonstrate that also the second constant $1/4$ on the right-hand side of \eqref{3.4} is optimal arguing by contradiction as follows: Suppose that for some 
$\varepsilon > 0$, 
\begin{align}   
\int_0^\pi dx \, |f'(x)|^2 \geq \dfrac{1}{4}\int_0^\pi dx \, \dfrac{|f(x)|^2}{\sin^2 (x)} 
+ \bigg(\f{1}{4} + \varepsilon\bigg) \int_0^\pi dx \, |f(x)|^2, \quad f \in \dom(T_{0,min}).   \lb{3.8}
\end{align}
Upon integrating by parts in the left-hand side of \eqref{3.8} this implies 
\begin{equation}
T_{0,min} \geq  \bigg(\f{1}{4} + \varepsilon\bigg) I_{(0,\pi)}, 
\end{equation}
implying 
\begin{equation}
T_{0,F} \geq  \bigg(\f{1}{4} + \varepsilon\bigg) I_{(0,\pi)} 
\end{equation}
(as $T_{0,min}$ and $T_{0,F}$ share the same lower bound by general principles), contradicting 
\eqref{2.22} for $s=0$. Hence also the 2nd constant $1/4$ on the right-hand side of \eqref{3.4} is optimal.

It remains to prove strictness of inequality \eqref{3.4} if $f \not\equiv 0$: Arguing again by contradiction, we suppose there exists $0 \neq f_0 \in H^1_0((0,\pi))$ such that 
\begin{equation} \lb{3.11}
\int_0^\pi dx \, |f_0'(x)|^2 = \dfrac{1}{4}\int_0^\pi dx \, \dfrac{|f_0(x)|^2}{\sin^2 (x)}
+\f{1}{4}\int_0^\pi dx \, |f_0(x)|^2.
\end{equation}
Since $H_0^1((0,\pi)) \subseteq \dom\big(T^{1/2}_{s,F}\big)$, $s \in [0,\infty)$ (in fact, one even has the equality $H_0^1((0,\pi)) = \dom\big(T^{1/2}_{s,F}\big)$ for all $ s \in (0,\infty)$, see, e.g., \cite{AB16}, \cite{BDG11}, 
\cite{DG21}, \cite{GP79}, \cite{Ka78}, \cite{KT11}), one concludes via \eqref{3.11} that
\begin{equation}
\big(T_{0,F}^{1/2} f_0, T_{0,F}^{1/2} f_0\big)_{L^2((0,\pi);dx)} = (1/4) \|f_0\|_{L^2((0,\pi);dx)}^2.  \lb{3.12} 
\end{equation}
Moreover, since $T_{0,F}$ is self-adjoint with purely discrete and necessarily simple spectrum, $T_{0,F}$ has the spectral representation 
\begin{equation}
T_{0,F} = \sum_{n \in \bbN_0} \lambda_n P_n, \quad \lambda_0 = 1/4 < \lambda_1 < \lambda_2 < \cdots,
\end{equation}
where $\sigma(T_{0,F}) = \{\lambda_n\}_{n \in \bbN_0}$ and $P_n$ are the one-dimensional projections onto the eigenvectors associated with the eigenvalues $\lambda_n$, $n \in \bbN_0$, explicitly listed in \eqref{A.13}, in particular, $\lambda_0 = 1/4$. Thus,
\begin{align}
\big(T_{0,F}^{1/2} f_0, T_{0,F}^{1/2} f_0\big)_{L^2((0,\pi);dx)} &= 
\sum_{n \in \bbN_0} \lambda_n (f_0, P_n f_0)_{L^2((0,\pi);dx)}     \no \\ 
& > \lambda_0 \sum_{n \in \bbN_0} (f_0, P_n f_0)_{L^2((0,\pi);dx)}    \no \\
& = \lambda_0 \|f_0\|_{L^2((0,\pi);dx)}^2 = (1/4) \|f_0\|_{L^2((0,\pi);dx)}^2
\end{align}
contradicting \eqref{3.12} unless
\begin{equation}
P_n f_0 = 0, \; n \in \bbN, \, \text{ and hence, } \, P_0 f_0 = f_0,   
\end{equation}
that is,
\begin{equation}
f_0 \in \dom(T_{0,F}) \, \text{ and } \, T_{0,F} f_0 = (1/4) f_0,    \lb{3.16} 
\end{equation}
employing $\lambda_0 = 1/4$. However, \eqref{3.16} implies that 
\begin{equation}
f_0(x) \underset{x \downarrow 0}{=} c x^{1/2} \big[1 + \Oh\big(x^2\big)\big] \, \text{ and hence, } \, 
f_0 \notin H_0^1((0,\pi)), 
\end{equation}
a contradiction. 
\end{proof}

\begin{remark} \lb{r3.2} 
$(i)$ That inequality \eqref{3.4} represents an improvement over the previously well-known cases \eqref{3.1} and \eqref{3.2} can be shown as follows: Since trivially
\begin{equation}
\sin(x) \leq x, \quad x \in [0,\pi],
\end{equation}
inequality \eqref{3.4} is obviously an improvement over the classical Hardy inequality \eqref{3.1}. On the other hand, since also 
\begin{align} \lb{3.19}
\sin (x) \leq \begin{cases}
x, & x\in[0,\pi/2],\\
\pi-x, & x\in[\pi/2,\pi],
\end{cases} 
\end{align}
that is (cf.\ \eqref{3.3}), 
\begin{equation}
\sin(x) \leq d_{(0,\pi)}(x), \quad x \in [0,\pi],  
\end{equation} 
inequality \eqref{3.4} also improves upon the refinement \eqref{3.2}. \\[1mm] 
$(ii)$ Assuming $a, b \in \bbR$, $a < b$, the elementary change of variables 
\begin{align}
\begin{split} 
& (0,\pi) \ni x \mapsto \xi(x) = [(b-a)x + a \pi]/\pi \in (a,b),   \\
& f(x) = F(\xi), 
\end{split} 
\end{align}
yields 
\begin{align}
\begin{split} 
\int_a^b d\xi \, |F'(\xi)|^2 & \geq \f{\pi^2}{4(b-a)^2} \int_a^b d \xi \, \f{|F(\xi)|^2}{\sin^2(\pi (\xi-a)/(b-a))}  \\
& \quad + \f{\pi^2}{4(b-a)^2} \int_a^b d \xi \, |F(\xi)|^2, \quad F \in H_0^1((a,b)). 
\end{split} 
\end{align} 
These scaling arguments apply to all Hardy-type inequalities considered in this paper and hence it suffices to restrict ourselves to convenient fixed intervals such as $(0,\pi)$, etc. 
\hfill$\diamond$
\end{remark}

\begin{remark} \lb{r3.2a} 
An earlier version of our preprint contained the following factorization of $\tau_s - (s+(1/2))^2$ into
\begin{equation}
\delta^+_s \delta_s^{} = \tau_s-(s+(1/2))^2 = - \f{d^2}{dx^2} + \f{s^2 - (1/4)}{\sin^2(x)}-(s+(1/2))^2, \quad s \in [0,\infty), \; x \in (0,\pi),  
\lb{B.3} 
\end{equation}
where the differential expressions $\delta_s$, $\delta_s^+$ are given by 
\begin{equation}
\delta_s = \f{d}{dx} - [s + (1/2)] \cot(x), \quad \delta^+_s = - \f{d}{dx} - [s + (1/2)] \cot(x), \quad 
s \in [0,\infty), \; x \in (0,\pi).      \lb{B.2} 
\end{equation} 
Thus, $\delta^+_s \delta_s^{}\big|_{C_0^{\infty}((0,\pi))} \geq 0$ yields $\tau_s\big|_{C_0^{\infty}((0,\pi))} \geq (s+(1/2))^2 I$ and taking $s=0$ implies inequality \eqref{3.4} for $f \in C_0^{\infty}((0,\pi))$ and hence for $f \in H_0^1((0,\pi))$ by the usual Fatou-type argument. Hence, if one is primarily interested in the refined Hardy inequality \eqref{3.4} itself, taking $s=0$ in \eqref{B.3} appears to be its quickest derivation. We're indebted to Ari Laptev for independently pointing this out to us which resulted in our reintroducing the factorization \eqref{B.3}. 

Considering
\begin{equation}
y_s(x) = [\sin(x)]^{(1 + 2s)/2}, \quad s \in [0,\infty), \; x \in (0,\pi),    \lb{B.4}
\end{equation}
one confirms that $\delta_s y_s = 0$, $s \in [0,\infty)$, and hence
\begin{equation}
(\tau_s y_s)(x) = [s + (1/2)]^2 y_s(x), \quad s \in [0,\infty), \; x \in (0,\pi).     \lb{B.5}
\end{equation}
A second linearly independent solution of $\tau_s y =  [s + (1/2)]^2 y$ is then given by
\begin{align} 
\begin{split} 
\hatt y_s (x) &= [\sin(x)]^{(1 + 2s)/2} \int_x^{\pi/2} dt \, [\sin(t)]^{-(1+ 2s)}    \lb{B.6} \\
& \hspace*{-0.7mm} 
\underset{x \downarrow 0}{=} \begin{cases} (2s)^{-1} x^{(1-2s)/2}[1 + \oh(1)], & s \in (0,\infty), \\
x^{1/2} \ln(1/x)[1 + \oh(1)], & s = 0. 
\end{cases}
\end{split} 
\end{align}
By inspection, $y_s' \in L^2((0,\pi);dx)$ if and only if $s \in (0,\infty)$, and hence there is a cancellation taking place in $\delta_0 y_0 =0$ for $s=0$, whereas $\hatt y_s \notin L^2((0,\pi);dx)$ for $s \in [1,\infty)$ (in accordance with $\tau_s$ being in the limit point case  for $s \in [1,\infty)$) and 
$\hatt y_s^{\, \prime} \notin L^2((0,\pi);dx)$ for $s \in [0,\infty)$. 
\hfill$\diamond$
\end{remark}

We emphasize once again that Theorem~3.1 was originally obtained by Avkhadiev in 2015, see \cite[Lemma~1]{Av15}. 

A closer inspection of the proof of Theorem \ref{t3.1} reveals that $[\sin(x)]^{-2}$ is just a very convenient choice for a function that has inverse square singularities at the interval endpoints as it leads to explicit optimal constants $1/4$ in \eqref{3.4}. To illustrate this point, we consider the differential expressions  
\begin{equation}
\omega_0 = - \f{d^2}{dx^2} - \f{1}{4 x^2}, \quad \alpha_0 = \f{d}{dx} - \f{1}{2x}, \quad 
 \alpha_0^+ = - \f{d}{dx} - \f{1}{2x},\quad x \in (0,\pi),      \lb{3.23}
\end{equation}
such that 
\begin{equation}
\alpha_0^+ \alpha_0 = \omega_0. 
\end{equation}
The minimal and maximal $L^2((0,\pi); dx)$-realizations associated with $\omega_0$ are then given by 
\begin{align}
&S_{0,min} f = \omega_0 f, \no
\\
& f \in \dom(S_{0,min})=\big\{g\in L^2((0,\pi); dx)  \, \big| \,  g,g'\in AC_{loc}((0,\pi));    \\ 
&\hspace*{1.7cm} \supp \, (g)\subset(0,\pi) \text{ is compact; } 
\omega_0 g\in L^2((0,\pi); dx)\big\}.   \no \\
&S_{0,max} f = \omega_0 f,     \no \\
& f \in \dom(S_{0,max})=\big\{g\in L^2((0,\pi); dx) \, \big| \,  g,g'\in AC_{loc}((0,\pi)); \\
& \hspace*{6.3cm}  \omega_0 g\in L^2((0,\pi); dx)\big\},     \no 
\end{align}
implying $S_{0,min}^* = S_{0,max}$, $S_{0,max}^* = S_{0,min}$, 
and we also introduce the following self-adjoint extensions of $S_{0,min}$, respectively, restrictions of $S_{0,max}$ (see, e.g., \cite{AA12}, \cite{AB15}, \cite{AB16}, \cite{BDG11}, \cite{DG21}, \cite{EK07}, \cite{GLN20}, \cite{GP79}, \cite{Ka78}, \cite{KT11}, \cite{Ro85}), 
\begin{align} 
&S_{0,D,N} f = \omega_0 f,     \no \\
& f \in \dom(S_{0,D,N}) = \{g\in \dom(S_{0,max}) \, | \,  \wti g(0)= g' (\pi) = 0 \}     \lb{3.26} \\
& \hspace*{2.6cm} = \big\{g\in \dom(S_{0,max}) \, \big| \, g' (\pi) = 0; \, \alpha_0 g \in L^2((0,\pi); dx)\big\}, 
\no \\
&S_{0,F} f = \omega_0 f,     \no \\
& f \in \dom(S_{0,F}) = \{g\in \dom(S_{0,max}) \, | \,  \wti g(0)= g (\pi) = 0 \}    \lb{3.27} \\
& \hspace*{2.25cm} = \big\{g\in \dom(S_{0,max}) \, \big| \, g(\pi) = 0; \, \alpha_0 g \in L^2((0,\pi); dx)\big\},
\no 
\end{align} 
with $S_{0,F}$ the Friedrichs extension of $S_{0,min}$. The quadratic forms  corresponding to $S_{0,D,N}$ and $S_{0,F}$ are of the form 
\begin{align}
& Q_{S_{0,D,N}} (f,g) = (\alpha_0 f, \alpha_0 g)_{L^2((0,\pi);dx)},   \no \\
& f, g \in \dom(Q_{S_{0,D,N}}) = \big\{g \in L^2((0,\pi); dx) \, \big| \, g \in AC_{loc}((0,\pi)); \, g'(\pi)=0, \\ 
& \hspace*{7.9cm} \alpha_0 g \in L^2((0,\pi); dx)\big\},    \no \\
& Q_{S_{0,F}} (f,g) = (\alpha_0 f, \alpha_0 g)_{L^2((0,\pi);dx)},   \no \\
& f, g \in \dom(Q_{S_{0,F}}) = \big\{g \in L^2((0,\pi); dx) \, \big| \, g \in AC_{loc}((0,\pi)); \, g(\pi) = 0, \\
& \hspace*{7.5cm} \alpha_0 g \in L^2((0,\pi); dx)\big\}.  \no 
\end{align} 
One verifies (see \eqref{B.12}) that for all $\varepsilon > 0$ and $g \in AC_{loc}((0,\varepsilon))$, 
\begin{equation}
\alpha_0 g \in L^2((0,\varepsilon); dx) \, \text{ implies } \, \wti g(0) = 0.
\end{equation}

By inspection,
\begin{align}
\begin{split} 
f_0(\lambda,x) &= x^{1/2} J_0\big(\lambda^{1/2}x\big), \quad x \in (0,\pi), \\
& \hspace*{-0.8mm} \underset{x \downarrow 0}{=} 
x^{1/2} \big[1 + \Oh\big(x^2\big)\big]  
\end{split}
\end{align}
(where $J_{\nu}(\dott)$ denotes the standard Bessel function of order $\nu \in \bbC$, cf.\ \cite[Ch.~9]{AS72})), satisfies (cf.\ \eqref{3.26}, \eqref{3.27})
\begin{equation}
\wti f_0(0) = 0.
\end{equation}
Thus, introducing Lamb's constant, now denoted by $\lambda_{D,N,0}^{1/2}$, as the first positive zero of 
\begin{equation} 
(0,\infty) \ni x \mapsto J_0(x) + 2x J_1(x)
\end{equation} 
(see the brief discussion in \cite{AW07}), one infers that $\lambda_{D,N,0}/\pi^2$ is the first positive zero of 
$f_0'(\dott,\pi)$, that is, 
\begin{equation} 
f_0' (\lambda_{D,N,0}/\pi^2,\pi) = 0. 
\end{equation} 
In addition denoting by $\lambda_{F,0}/\pi^2$ the first strictly positive zero of $f_0(\dott,\pi)$ one has
\begin{equation}
f_0 (\lambda_{F,0}/\pi^2,\pi) = 0,  
\end{equation}
and hence $\lambda_{D,N,0}/\pi^2$ and $\lambda_{F,0}/\pi^2$ are the first eigenvalue of the mixed Dirichlet/Neumann operator $S_{0,D,N}$ and the Dirichlet operator (the Friedrichs extension of 
$S_{0,min}$) $S_{0,F}$, respectively\footnote{In particular, $\lambda_{D,N,0}$ and $\lambda_{F,0}$ are the first eigenvalue of the mixed Dirichlet/Neumann and Dirichlet operator on the interval $(0,1)$.}. Equivalently, 
\begin{equation}
\inf(\sigma(S_{0,D,N})) = \lambda_{D,N,0} \pi^{-2}, \quad \inf(\sigma(S_{0,F})) = \lambda_{F,0} \pi^{-2},
\end{equation}
in particular, 
\begin{align}
& S_{0,D,N} \geq \lambda_{D,N,0} \pi^{-2} I_{L^2((0,\pi);dx)}, \quad 
S_{0,F} \geq \lambda_{F,0} \pi^{-2} I_{L^2((0,\pi);dx)},    \\
& Q_{S_{0,D,N}} (f,f) \geq \lambda_{D,N,0} \pi^{-2} \|f\|^2_{L^2((0,\pi);dx)}, \quad f \in \dom(Q_{S_{0,D,N}}), \\
& Q_{S_{0,F}} (f,f) \geq \lambda_{F,0} \pi^{-2} \|f\|^2_{L^2((0,\pi);dx)}, \quad f \in \dom(Q_{S_{0,F}}). 
\end{align}
Numerically, one confirms that
\begin{equation}
\lambda_{D,N,0} = 0.885..., \quad \lambda_{F,0} = 5.783...\, .
\end{equation}

Thus, arguments analogous to the ones in the proof of Theorem \ref{t3.1} yield the following variants of \eqref{3.1}, \eqref{3.2}, 
\begin{align}
\begin{split} 
\int_0^{\pi} dx \, |f'(x)|^2 &\geq \dfrac{1}{4}\int_0^{\pi} dx \, \dfrac{|f(x)|^2}{x^2} 
+ \f{\lambda_{D,N,0}}{\pi^2} \int_0^{\pi} dx \, |f(x)|^2,   \lb{3.39} \\   
& \hspace*{1.9cm} f \in \dom(Q_{S_{0,D,N}}) \cap H^1((0,\pi)), 
\end{split} \\[2mm]  
\begin{split} 
\int_0^{\pi} dx \, |f'(x)|^2 &\geq \dfrac{1}{4}\int_0^{\pi} dx \, \dfrac{|f(x)|^2}{x^2} 
+ \f{\lambda_{F,0}}{\pi^2} \int_0^{\pi} dx \, |f(x)|^2,    \lb{3.40} \\   
& \hspace*{4cm} f \in H_0^1((0,\pi)), 
\end{split} 
\end{align} 
as well as,
\begin{align}
\begin{split} 
\int_0^{\pi} dx \, |f'(x)|^2 \geq \dfrac{1}{4}\int_0^{\pi} dx \, \dfrac{|f(x)|^2}{d_{(0,\pi)}(x)^2} 
+ \f{4 \lambda_{D,N,0}}{\pi^2} \int_0^{\pi} dx \, |f(x)|^2,&    \lb{3.41} \\   
f \in H_0^1((0,\pi)).& 
\end{split} 
\end{align}
All constants in \eqref{3.39}--\eqref{3.41} are optimal and the inequalities are all strict (for $f \not\equiv 0$).

In obtaining \eqref{3.39}--\eqref{3.41} one makes use of the fact that the domain of a semibounded, self-adjoint operator $A$ in the complex, separable Hilbert space $\cH$ is a form core for $A$, equivalently, a core for $|A|^{1/2}$. In addition, we used (cf.\ \cite[Theorem~7.1]{EK07}) that for $f \in \dom(S_{0,D,N}) 
\cup \dom(S_{0,F})$, there exists $K_0(f) \in \bbC$ such that
\begin{equation}
\lim_{x \downarrow 0}  x^{-1/2} f(x) = K_0(f), \quad \lim_{x \downarrow 0} x^{1/2} f'(x) = K_0(f)/2, 
\quad \lim_{x \downarrow 0} f(x) f'(x) = K_0(f)^2/2.    \lb{3.43} 
\end{equation}
Moreover, if in addition $f' \in L^2((0,1);dx)$, $f(0) = 0$, combining \eqref{3.43} with estimate \eqref{B.16} yields $K_0(f) = 0$, and hence 
$\lim_{x \downarrow} f(x) f'(x) = 0$. This permits one to integrate by parts in $Q_{S_{0,D,N}} (f,f)$ and 
$Q_{S_{0,F}} (f,f)$ and in the process verify \eqref{3.39}--\eqref{3.41}. 

We note that inequalities \eqref{3.39} and \eqref{3.41} were first derived by Avkhadiev and Wirths  \cite{AW07} (also recorded in  \cite{AW11} and \cite[Sect.~3.6.3]{BEL15}; see also \cite{Av21}, \cite{Av21a}, \cite{HHL02}, \cite{NM20}) following a different approach applicable to the multi-dimensional case. We have not found inequality \eqref{3.40} in the literature,  but expect it to be known. 

\begin{remark} \lb{r3.3}
The arguments presented thus far might seem to indicate that Hardy-type inequalities are naturally associated with underlying second-order differential operators satisfying boundary conditions of the Dirichlet and/or Neumann type at the interval endpoints. However, this is not quite the case as the following 
result (borrowed, e.g., from \cite[Lemma~5.3.1]{Da95}, \cite[Sect.~1.1]{OK90}) shows: Suppose $b \in (0,\infty)$, 
$f \in AC_{loc}((0,b))$, $f' \in L^2((0,b);dx)$, $f(0) = 0$, then (with $f$ real-valued without loss of generality), 
\begin{align}
& \int_0^b dx \, |f'(x)|^2 = \int_0^b dx \, \big|x^{1/2} \big[x^{-1/2} f(x)\big]' + (2x)^{-1} f(x)\big|^2 \no \\
& \quad = \int_0^b dx \, \Big\{4^{-1} x^{-2} f(x)^2 + x^{-1/2} f(x) \big[x^{-1/2} f(x)\big]' + 
x \big[\big(x^{-1/2} f(x)\big)'\big]^2\Big\}    \no \\
& \quad \geq \int_0^b dx \, \Big\{4^{-1} x^{-2} f(x)^2 + x^{-1/2} f(x) \big[x^{-1/2} f(x)\big]'\Big\}    \no \\ 
& \quad =  \int_0^b dx \, \f{|f(x)|^2}{4 x^2} + 2^{-1} \big[x^{-1/2} f(x)\big]^2 \big|_{x=0}^b   \no \\
& \quad \geq  \int_0^b dx \, \f{|f(x)|^2}{4 x^2} - 2^{-1} \lim_{x \downarrow 0}\big[x^{-1/2} f(x)\big]^2 \no \\ 
& \quad =  \int_0^b dx \, \f{|f(x)|^2}{4 x^2},       \lb{3.46} 
\end{align} 
employing the estimate \eqref{B.16} with $f(0)=0$. In particular, no boundary conditions whatsoever are needed at the right end point $b$. One notes that the hypotheses on $f$ imply that $f \in AC([0,b])$ and hence actually that $f$ behaves like an $H^1_0$-function in a right neighborhood of $x =0$, equivalently, 
$f \wti \chi_{[0,b/2]} \in H^1_0((0,b))$, where 
\begin{equation}
\wti \chi_{[0,r/2]} (x) = \begin{cases} 1, & x \in [0, r/4], \\
0, & x \in [3r/4,r], 
\end{cases} \quad \wti \chi_{[0,r/2]} \in C^{\infty}([0,r]), \; r \in (0,\infty). 
\end{equation}
\hfill $\diamond$
\end{remark}

\begin{remark} \lb{r3.4} 
Employing locality of the operators involved, one can show (cf.\ \cite{GPS21}) that all considerations in the bulk of this paper, extend to the situation where 
\begin{equation}
q(x) = \f{s^2 - (1/4)}{x^2}, \, \text{ respectively, } \, q(x) = \f{s^2 - (1/4)}{\sin^2(x)}, \quad s \in [0,\infty),
\end{equation}
is replaced by a potential $q$ satisfying $q \in L^1_{loc}((0,\pi);dx)$ and for some 
$s_j \in [0,\infty)$, $j=1,2$, and some $0 < \varepsilon$ sufficiently small, 
\begin{align}
&q_{s_1,s_2}(x) = \begin{cases} \big[s_1^2 - (1/4)\big] x^{-2}, &x \in (0, \varepsilon), \\[1mm] 
\big[s_2^2 - (1/4)\big] (x - \pi)^{-2}, & x \in (\pi - \varepsilon, \pi). 
\end{cases} 
\end{align}
As discussed in \cite{Ka78}, this can be replaced by $q \geq q_{s_1,s_2}$ a.e.

In addition, we only presented the tip of an iceberg in this section as these considerations naturally extend to more general Sturm--Liouville operators in $L^2((a,b); dx)$ generated by differential expressions of the type 
\begin{equation} 
- \f{d}{dx} p(x) \f{d}{dx} + q(x), \quad x \in (a,b), 
\end{equation} 
as discussed to some extent in \cite{GU98}. We will return to this and the general three-coefficient Sturm--Liouville operators in $L^2((a,b); rdx)$ generated by 
\begin{equation} 
\f{1}{r(x)} \bigg[- \f{d}{dx} p(x) \f{d}{dx} + q(x)\bigg], \quad x \in (a,b),  
\end{equation} 
elsewhere. \hfill $\diamond$ 
\end{remark}

\appendix

\section{The Weyl--Titchmarsh--Kodaira $m$-Function \\ Associated with $T_{s,F}$} \lb{sA} 

We start by introducing a normalized fundamental system of solutions $\phi_{0,s}(z,\dott)$ and 
$\theta_{0,s}(z,\dott)$ of $\tau_s u=zu$, $s \in [0,1)$, $z \in \bbC$, satisfying (cf.\ the generalized boundary values introduced in \eqref{2.13}, \eqref{2.14})
\begin{align}
\wti \theta_{0,s}(z,0)=1,\quad \wti \theta^{\, \prime}_{0,s}(z,0)=0,  \quad 
\wti \phi_{0,s}(z,0)=0,\quad \wti \phi^{\, \prime}_{0,s}(z,0)=1, 
\end{align}
with $\phi_{0,s}(\dott,x)$ and $\theta_{0,s}(\dott,x)$ entire for fixed $x \in (0,\pi)$. 
To this end, we introduce the two linearly independent solutions to $\tau_s y=zy$ (entire w.r.t. $z$ for fixed 
$x \in (0,\pi)$) given by
\begin{align}  \lb{A.2}
y_{1,s}(z,x)&=[\sin (x)]^{(1-2s)/2}    \no \\ 
& \quad \times F\big(\big[(1/2) - s  + z^{1/2}\big]\big/2,\big[(1/2) - s  - z^{1/2}\big]\big/2;1/2;\cos^2 (x)\big), \no \\
y_{2,s}(z,x)&=\cos (x) [\sin (x)]^{(1-2s)/2}      \\
& \quad \times F\big(\big[(3/2) - s  + z^{1/2}\big]\big/2,\big[(3/2) - s  - z^{1/2}\big]\big/2;3/2;\cos^2 (x)\big), \no \\
&\hspace{5.55cm} s\in[0,1),\; z\in\bbC,\; x\in(0,\pi).    \no 
\end{align}
Using the connection formula found in \cite[Eq. 15.3.6]{AS72} yields the behavior near $x=0,\pi,$
\begin{align}
y_{1,s}(z,x)&=[\sin (x)]^{(1-2s)/2}\dfrac{\pi^{1/2}\Gamma(s)}{\Gamma\big(\big[(1/2) + s  + z^{1/2}\big]\big/2\big)\Gamma\big(\big[(1/2) + s  - z^{1/2}\big]\big/2\big)} \no \\
&\qquad \times F\big(\big[(1/2) - s  + z^{1/2}\big]\big/2,\big[(1/2) - s  - z^{1/2}\big]\big/2;1-s;\sin^2 (x)\big) \no \\
&\quad\; + [\sin (x)]^{(1+2s)/2}\dfrac{\pi^{1/2}\Gamma(-s)}{\Gamma\big(\big[(1/2) - s  + z^{1/2}\big]\big/2\big)\Gamma\big(\big[(1/2) - s  - z^{1/2}\big]\big/2\big)} \no \\
&\qquad \times F\big(\big[(1/2) + s  + z^{1/2}\big]\big/2,\big[(1/2) + s  - z^{1/2}\big]\big/2;1+s;\sin^2 (x)\big), \no \\ 
&\hspace{5.6cm} s\in(0,1),\; z\in\bbC,\; x\in(0,\pi),  \lb{A.3} \\[1mm]
y_{2,s}(z,x)&=\cos (x) [\sin (x)]^{(1-2s)/2}    \no \\
& \qquad \times \dfrac{\pi^{1/2}\Gamma(s)}{2\Gamma\big(\big[(3/2) 
+ s  + z^{1/2}\big]\big/2\big)\Gamma\big(\big[(3/2) + s  - z^{1/2}\big]\big/2\big)} \no \\
&\qquad \times F\big(\big[(3/2) - s  + z^{1/2}\big]\big/2,\big[(3/2) - s  - z^{1/2}\big]\big/2;1-s;\sin^2 (x)\big) \no \\
&\quad\; + \cos (x) [\sin (x)]^{(1+2s)/2}      \no \\ 
& \qquad \times \dfrac{\pi^{1/2}\Gamma(-s)}{2\Gamma\big(\big[(3/2) - s  + z^{1/2}\big]\big/2\big)\Gamma\big(\big[(3/2) - s  - z^{1/2}\big]\big/2\big)} \no \\
&\qquad \times F\big(\big[(3/2) + s  + z^{1/2}\big]\big/2,\big[(3/2) + s  - z^{1/2}\big]\big/2;1+s;\sin^2 (x)\big), \no \\[1mm]
&\hspace{5.cm} s\in(0,1),\; z\in\bbC,\; x\in(0,\pi)\backslash\{\pi/2\}.   \no 
\end{align}

\begin{remark} \lb{rA.1}
Before we turn to the case $s=0$, we recall Gauss's identity (cf.\ \cite[no.~15.1.20]{AS72})
\begin{equation}
F(\alpha,\beta;\gamma;1) = \f{\Gamma(\gamma) \Gamma(\gamma-\alpha-\beta)}{\Gamma(\gamma-\alpha) 
\Gamma(\gamma-\beta)}, \quad \gamma \in\bbC \backslash \{ - \bbN_0\}, 
\; \Re(\gamma-\alpha-\beta) > 0,    \lb{A.3a} 
\end{equation} 
and the differentiation formula (cf.\ \cite[no.~15.2.1]{AS72})
\begin{equation}
\f{d}{dz} F(\alpha,\beta;\gamma;z) = \f{\alpha\beta}{\gamma} F(\alpha+1,\beta+1;\gamma+1;z), 
\quad \alpha, \beta, \gamma \in \bbC, \; z \in \{\zeta \in \bbC \,|\, |\zeta| < 1\}, 
\lb{A.3b} 
\end{equation}
which imply that for $s \in (0,1)$, the two $F(\dott,\dott;\dott;1)$ exist in \eqref{A.2} (indeed, for $j=1, 2$ one obtains  with $s \in (0,1)$, and with the values of $\alpha,\beta,\gamma$ taken from \eqref{A.2}, that 
$\gamma-\alpha-\beta = s > 0$) and hence the asymptotic behavior of $y_{j,s}(z,x)$, $j=1,2$, as $x \downarrow 0$ and $x \uparrow \pi$ is dominated by $x^{(1-2s)/2}$ and 
$(\pi-x)^{(1-2s)/2}$, respectively. However, the analogous statement fails for $y_{j,s}'(z,x)$, $j=1,2$, as, taking into account \eqref{A.3b}, the analog of the 2nd condition in \eqref{A.3a}, namely, $\Re[\gamma+1-(\alpha+1)-(\beta+1)]>0$, is not fulfilled (in this case the values of $\alpha,\beta,\gamma$ taken from \eqref{A.2} with $s \in (0,1)$ yield 
$[\gamma+1 - (\alpha+1) - (\beta+1)] = s - 1 < 0$). The situation is similar for the first two $F(\dott,\dott;\dott;x)$ for $y_{1,s}(z,x)$ in \eqref{A.3} as $x \to \pi/2$ as in this case the two $F(\dott,\dott;\dott;1)$ exist. Even though for $y_{2,s}(z,x)$ in \eqref{A.3} the two $F(\dott,\dott;\dott;1)$ do not exist individually, the limit of each term does exist due to the multiplication by the factor $\cos(x)$. To see this, one can instead consider the limit (cf. \cite[no.~15.4.23]{OLBC10})
\begin{align}
\lim_{z\to1^-}\dfrac{F(\alpha,\beta;\gamma;z)}{(1-z)^{\gamma-\alpha-\beta}}=\dfrac{\Gamma(\gamma) 
\Gamma(\alpha+\beta-\gamma)}{\Gamma(\alpha)\Gamma(\beta)},\quad \gamma \in\bbC \backslash \{ - \bbN_0\}, 
\; \Re(\gamma-\alpha-\beta) < 0,
\end{align}
which through the appropriate change of variable reveals that the connection formula for $y_{2,s}(z,x)$ as $x \to \pi/2$ approaches $0$ as expected from evaluating $y_{2,s}(z,\pi/2)$ in \eqref{A.2}. But once again, the analog of the 2nd condition in \eqref{A.3a}, namely, $\Re[\gamma+1-(\alpha+1)-(\beta+1)] > 0$, fails for the four $F'(\dott,\dott;\dott;x)$ in \eqref{A.3} as $x \to \pi/2$. 
${}$ \hfill $\diamond$
\end{remark}

Similarly, by \cite[Eq. 15.3.10]{AS72} one obtains for the remaining case $s=0$, 
\begin{align}
&y_{1,0}(z,x)=\dfrac{\pi^{1/2}[\sin (x)]^{1/2}}{\Gamma\big(\big[(1/2) + z^{1/2}\big]\big/2\big)\Gamma\big(\big[(1/2) - z^{1/2}\big]\big/2\big)}
\no \\
& \hspace*{1.7cm}  \times 
\sum_{n=0}^\infty \dfrac{\big(\big[(1/2) + z^{1/2}\big]\big/2\big)_n\big(\big[(1/2) - z^{1/2}\big]\big/2\big)_n}{(n!)^2} 
 \big[2\psi(n+1)    \no \\
&\hspace*{2cm}  -\psi\big(n+\big[(1/2) + z^{1/2}\big]\big/2\big)-\psi\big(n+\big[(1/2) - z^{1/2}\big]\big/2\big)    \no \\
& \hspace*{2cm} -\ln(\sin^2 (x))\big]
[\sin (x)]^{2n},     \lb{A.4}  \\[1mm]
&y_{2,0}(z,x)=\dfrac{\pi^{1/2}\cos (x) [\sin (x)]^{1/2}}{2\Gamma\big(\big[(3/2) + z^{1/2}\big]\big/2\big)\Gamma\big(\big[(3/2) - z^{1/2}\big]\big/2\big)}    \no \\
& \hspace*{1.7cm}  \times 
\sum_{n=0}^\infty \dfrac{\big(\big[(3/2) + z^{1/2}\big]\big/2\big)_n\big(\big[(3/2) - z^{1/2}\big]\big/2\big)_n}{(n!)^2}  \big[2\psi(n+1)
\no \\
& \hspace*{2cm}  -\psi\big(n+\big[(3/2) + z^{1/2}\big]\big/2\big)-\psi\big(n+\big[(3/2) - z^{1/2}\big]\big/2\big)   \no \\
& \hspace*{2cm} - \ln\big(\sin^2 (x)\big)\big][\sin (x)]^{2n}, \no \\[1mm]
& \hspace*{2.15cm} s=0,\; z\in\bbC,\; x\in(0,\pi). \no
\end{align}
Here $\psi(\dott) = \Gamma'(\dott)/\Gamma(\dott)$ denotes the Digamma function, $\gamma_{E} = - \psi(1) = 0.57721\dots$ represents Euler's constant, and 
\begin{equation}
(\zeta)_0 =1, \quad (\zeta)_n = \Gamma(\zeta + n)/\Gamma(\zeta), \; n \in \bbN, 
\quad \zeta \in \bbC \backslash (-\bbN_0),      \lb{A.5} 
\end{equation}
abbreviates Pochhammer's symbol $($see, e.g., \cite[Ch.~6]{AS72}$)$. 
Direct computation now yields
\begin{align}
\wti y_{1,s}(z,0)&=-\wti y_{1,s}(z,\pi)=\dfrac{2\pi^{1/2}\Gamma(1+s)}{\Gamma\big(\big[(1/2) + s  + z^{1/2}\big]\big/2\big)\Gamma\big(\big[(1/2) + s  - z^{1/2}\big]\big/2\big)},      \no \\[1mm]
\wti y^{\, \prime}_{1,s}(z,0)&=\wti y^{\, \prime}_{1,s}(z,\pi)=\dfrac{\pi^{1/2}\Gamma(-s)}{\Gamma\big(\big[(1/2) - s  + z^{1/2}\big]\big/2\big)\Gamma\big(\big[(1/2) - s  - z^{1/2}\big]\big/2\big)},       \no \\[1mm]
\wti y_{2,s}(z,0)&=\wti y_{2,s}(z,\pi)=\dfrac{\pi^{1/2}\Gamma(1+s)}{\Gamma\big(\big[(3/2) + s  + z^{1/2}\big]\big/2\big)\Gamma\big(\big[(3/2) + s  - z^{1/2}\big]\big/2\big)},      \no \\[1mm]
\wti y^{\, \prime}_{2,s}(z,0)&=-\wti y^{\, \prime}_{2,s}(z,\pi)
=\dfrac{\pi^{1/2}\Gamma(-s)}{2\Gamma\big(\big[(3/2) - s  + z^{1/2}\big]\big/2\big)\Gamma\big(\big[(3/2) - s  - z^{1/2}\big]\big/2\big)},      \no \\[1mm]
&\hspace*{7cm} s\in(0,1),\, z\in\bbC,    \lb{A.6} \\
\begin{split}\lb{A.7}
\wti y_{1,0}(z,0)&=-\wti y_{1,0}(z,\pi)=\dfrac{2\pi^{1/2}}{\Gamma\big(\big[(1/2) + z^{1/2}\big]\big/2\big)\Gamma\big(\big[(1/2) - z^{1/2}\big]\big/2\big)},\\[1mm]
\wti y^{\, \prime}_{1,0}(z,0)&=\wti y^{\, \prime}_{1,0}(z,\pi)     \\
& =\dfrac{-\pi^{1/2}\big[2\gamma_E+\psi\big(\big[(1/2) + z^{1/2}\big]\big/2\big)+\psi\big(\big[(1/2) - z^{1/2}\big]\big/2\big)\big]}{\Gamma\big(\big[(1/2) + z^{1/2}\big]\big/2\big)\Gamma\big(\big[(1/2) - z^{1/2}\big]\big/2\big)},\\[1mm]
\wti y_{2,0}(z,0)&=\wti y_{2,0}(z,\pi)=\dfrac{\pi^{1/2}}{\Gamma\big(\big[(3/2) + z^{1/2}\big]\big/2\big)\Gamma\big(\big[(3/2) - z^{1/2}\big]\big/2\big)},\\[1mm]
\wti y^{\, \prime}_{2,0}(z,0)&=-\wti y^{\, \prime}_{2,0}(z,\pi)    \\ 
& =\dfrac{-\pi^{1/2}\big[2\gamma_E+\psi\big(\big[(3/2) + z^{1/2}\big]\big/2\big)+\psi\big(\big[(3/2) - z^{1/2}\big]\big/2\big)\big]}{2\Gamma\big(\big[(3/2) + z^{1/2}\big]\big/2\big)\Gamma\big(\big[(3/2) - z^{1/2}\big]\big/2\big)},\\[1mm]
&\hspace*{7.5cm} s=0,\, z\in\bbC.
\end{split}
\end{align}
In particular, one obtains
\begin{align}
\begin{split}
\phi_{0,s}(z,x)&= \wti y_{2,s}(z,0)y_{1,s}(z,x) - \wti y_{1,s}(z,0) y_{2,s}(z,x),     \\[1mm]
\theta_{0,s}(z,x)&= \wti y^{\, \prime}_{1,s}(z,0) y_{2,s}(z,x) - \wti y^{\, \prime}_{2,s}(z,0)y_{1,s}(z,x),  \\[1mm]
&\hspace{2.1cm} s\in[0,1),\; z\in\bbC,\; x\in(0,\pi),
\end{split}
\end{align}
since 
\begin{align}
W(y_{1,s}(z,\dott), y_{2,s}(z,\dott)) = \wti y_{1,s}(z,0)\wti y^{\, \prime}_{2,s}(z,0) 
- \wti y^{\, \prime}_{1,s}(z,0)\wti y_{2,s}(z,0) = - 1,     \lb{A.11}
\end{align}
with the generalized boundary values given by \eqref{A.6}, \eqref{A.7}. To prove \eqref{A.11} one recalls Euler's reflection formula (cf. \cite[no.~6.1.17]{AS72})
\begin{align}
\Gamma(z)\Gamma(1-z)=\dfrac{\pi}{\sin(\pi z)}, \quad z \in \bbC \backslash \bbZ, 
\end{align}
and hence concludes that
\begin{align}
\begin{split} 
& \Gamma\big(\big[(1/2) + \varepsilon s \pm z^{1/2}\big]\big/2\big)\Gamma\big(\big[(3/2) - \varepsilon s \mp z^{1/2}\big]\big/2\big)   \\
& \quad =\dfrac{\pi}{\sin\big(\pi\big[(1/2) + \varepsilon \pm z^{1/2}\big]\big/2\big)}, \quad \varepsilon \in \{-1,1\}.
\end{split} 
\end{align}
Thus one computes for $s\in(0,1)$,
\begin{align}
& W(y_{1,s}(z,\dott), y_{2,s}(z,\dott))     \no \\
& \quad =-[\sin(\pi s)]^{-1}\big\{\sin\big(\pi\big[(1/2)+s + z^{1/2}\big]\big/2\big)\sin\big(\pi\big[(1/2)+s - z^{1/2}\big]\big/2\big)  \no \\
& \quad \hspace*{2.5cm}-\sin\big(\pi\big[(1/2)-s + z^{1/2}\big]\big/2\big)\sin\big(\pi\big[(1/2)-s - z^{1/2}\big]\big/2\big) \big\}  \no\\
& \quad =-[2\sin(\pi s)]^{-1}\{-\cos(\pi[(1/2)+s ])+\cos(\pi[(1/2)-s ]) \} = -1.     
\end{align} 
For the case $s=0$, one recalls the reflection formula for the Digamma function (cf. \cite[no.~6.3.7]{AS72})
\begin{align}
\psi(1-z)-\psi(z)=\pi\cot(\pi z), \quad z \in \bbC \backslash \bbZ, 
\end{align}
and applies trigonometric identities to obtain $W(y_{1,0}(z,\dott), y_{2,0}(z,\dott)) = -1$.

The singular Weyl--Titchmarsh--Kodaira function $m_{0,0,s}(z)$ is then uniquely determined (cf. \cite[Eq. (3.18)]{GLPS21} and \cite{GLN20} for background on $m$-functions) to be\footnote{Here the subscripts $0,0$ in $m_{0,0,s}$ indicate the Dirichlet (i.e., Friedrichs) boundary conditions at $x=0, \pi$, a special case of the $m_{\alpha,\beta}$-function discussed in \cite{GLN20} associated with separated boundary conditions at $x=0, \pi$, indexed by boundary condition parameters $\alpha, \beta \in [0,\pi]$.}
\begin{align}
m_{0,0,s}(z)= 
- \f{\wti\theta_{0,s} (z,\pi)}{\wti\phi_{0,s} (z,\pi)}, \quad s\in[0,1),\; z\in\rho(T_{s,F}).   
\end{align}
Direct calculation once again yields
\begin{align}
m_{0,0}(z)&= -\dfrac{\wti y^{\, \prime}_{2,s}(z,0)\wti y_{1,s}(z,\pi)-\wti y^{\, \prime}_{1,s}(z,0)\wti y_{2,s}(z,\pi)}{2\wti y_{1,s}(z,0)\wti y_{2,s}(z,0)} \no \\[1mm]
&=\begin{cases}
\dfrac{\pi\Gamma(-s)}{4\Gamma(1+s)}\bigg[\dfrac{\Gamma\big(\big[(3/2) + s  + z^{1/2}\big]\big/2\big)\Gamma\big(\big[(3/2) + s  - z^{1/2}\big]\big/2\big)}{\Gamma\big(\big[(3/2) - s  + z^{1/2}\big]\big/2\big)\Gamma\big(\big[(3/2) - s  - z^{1/2}\big]\big/2\big)} \\[3mm]
\qquad\qquad\quad +\dfrac{\Gamma\big(\big[(1/2) + s  + z^{1/2}\big]\big/2\big)\Gamma\big(\big[(1/2) + s  - z^{1/2}\big]\big/2\big)}{\Gamma\big(\big[(1/2) - s  + z^{1/2}\big]\big/2\big)\Gamma\big(\big[(1/2) - s  - z^{1/2}\big]\big/2\big)}\bigg], \\
\hspace*{8.2cm} s\in(0,1),\\[3mm]
-\big[4\gamma_E+\psi\big(\big[(1/2) + z^{1/2}\big]\big/2\big)+\psi\big(\big[(1/2) - z^{1/2}\big]\big/2\big) \\[1mm]
\quad \, +\psi\big(\big[(3/2) + z^{1/2}\big]\big/2\big)+\psi\big(\big[(3/2) - z^{1/2}\big]\big/2\big)\big]/4, \quad s=0,
\end{cases} \no \\
&\hspace*{8.15cm} z\in\rho(T_{s,F}),
\end{align}
which has simple poles precisely at the simple eigenvalues of $T_{s,F}$ given by
\begin{align}\lb{A.12}
\sigma(T_{s,F})=\big\{[(1/2)+s+n]^2\big\}_{n\in\bbN_0},\quad s\in[0,1).
\end{align}

\begin{remark} \lb{rA.2} 
For the limit point case at both endpoints, that is, for $s \in [1,\infty)$, the solutions $y_{j,s}(z,\dott)$ in \eqref{A.2} remain linearly independent and also the connection formulas \eqref{A.3} remain valid for 
$s \in [1,\infty) \backslash \bbN$. Moreover, employing once again \eqref{A.3a} and \eqref{A.3b} one verifies 
that the two $F(\dott,\dott;\dott;1)$ as well as $F'(\dott,\dott;\dott;1)$ are well defined in \eqref{A.2} and hence for 
$s \in [1,\infty)$, the asymptotic behavior of $y_{j,s}(z,x)$ and $y_{j,s}'(z,x)$, $j=1,2$, as $x \downarrow 0$ and $x \uparrow \pi$ is dominated by $x^{(1-2s)/2}$ and $x^{-(1+2s)/2}$ and $(\pi-x)^{(1-2s)/2}$ and 
$(\pi-x)^{-(1+2s)/2}$, respectively. Since in connection with \eqref{A.3} one then has $\gamma-\alpha-\beta=\pm1/2$, independently of the value of $s \in (0,\infty)$, the situation described in Remark \ref{rA.1} for \eqref{A.3} and 
$s \in (0,1)$ applies without change to the current case $s \in [1,\infty)$. 

Actually, some of these failures (as $x \to \pi/2$ in $y_{j,s}'(z,x)$, $j=1,2$) are crucial for the following elementary reason: The function 
\begin{align}
& [\sin (x)]^{(1+2s)/2}\dfrac{\pi^{1/2}\Gamma(-s)}{\Gamma\big(\big[(1/2) - s  + z^{1/2}\big]\big/2\big)\Gamma\big(\big[(1/2) - s  - z^{1/2}\big]\big/2\big)} \no \\
&\quad \times F\big(\big[(1/2) + s  + z^{1/2}\big]\big/2,\big[(1/2) + s  - z^{1/2}\big]\big/2;1+s;\sin^2 (x)\big),  \\
&\hspace{5.6cm} s\in [1,\infty),\; z\in\bbC,\; x\in(0,\pi),    \no 
\end{align}
(i.e., the analog of the second part of $y_{1,s}(z,\dott)$ on the right-hand side in \eqref{A.3}) generates an $L^2((0,\pi); dx)$-element near $x=0,\pi$, and hence if this function and its $x$-derivative were locally absolutely continuous in a neighborhood of $x = \pi/2$ (the only possibly nontrivial point in the interval $(0,\pi)$), the self-adjoint maximal operator $T_{s,max}$, $s \in [1,\infty)$, would have eigenvalues for all $z \in \bbC$, an obvious contradiction.
\hfill $\diamond$
\end{remark}

Because of the subtlety pointed out in Remark \ref{rA.2} we omit further details on the limit point case 
$s \in [1,\infty)$ and refer to \cite[Sect. 4]{GK85}, instead. In particular, \cite[Theorem~4.1\,b)]{GK85} extends \eqref{A.12} to $s \in [1,\infty)$ and hence one actually has
\begin{align}\lb{A.13}
\sigma(T_{s,F})=\big\{[(1/2)+s+n]^2\big\}_{n\in\bbN_0}, 
\quad s\in[0,\infty).
\end{align}

\section{Remarks on Hardy-Type Inequalities} \lb{sB} 

In this appendix we recall a Hardy-type inequality useful in Section \ref{s2}.

Introducing the differential expressions $\alpha_s$, 
$\alpha^+_s$ (cf.\ \eqref{3.23} for $s = 0$),
\begin{equation}
\alpha_s = \f{d}{dx} - \f{s + (1/2)}{x}, \quad \alpha_s^+ = - \f{d}{dx} - \f{s + (1/2)}{x}, \quad 
s \in [0,\infty), \; x \in (0,\pi),      \lb{B.7}
\end{equation}
one confirms that 
\begin{equation}
\alpha_s^+ \alpha_s = \omega_s = - \f{d^2}{dx^2} + \f{s^2 - (1/4)}{x^2}, \quad s \in [0,\infty), 
\; x \in (0,\pi).     \lb{B.8} 
\end{equation}

Following the Hardy inequality considerations in \cite{GP79}, \cite{Ka72}, \cite{KW72}, one obtains the following 
basic facts.

\begin{lemma} \lb{lB.1} 
Suppose $f \in AC_{loc}((0,\pi))$, $\alpha_s f \in L^2((0,\pi); dx)$ for some $s \in\bbR$, and 
$0 < r_0 < r_1 < \pi < R < \infty$. Then, 
\begin{align}
\begin{split} 
& \int_{r_0}^{r_1} dx \, |(\alpha_s f)(x)|^2 \geq s^2 \int_{r_0}^{r_1} dx \, \f{|f(x)|^2}{x^2} + 
\f{1}{4} \int_{r_0}^{r_1} dx \, \f{|f(x)|^2}{x^2 [\ln(R/x)]^2}    \lb{B.9} \\
& \hspace*{3.25cm} - s \f{|f(x)|^2}{x}\bigg|_{x = r_0}^{r_1} - \f{|f(x)|^2}{2 x [\ln(R/x)]}\bigg|_{x = r_0}^{r_1},  
\end{split} \\
& \int_{r_0}^{r_1} dx \, x \ln(R/x) \bigg|\bigg[\f{f(x)}{x^{1/2} [\ln(R/x)]^{1/2}}\bigg]'\bigg|^2    \no \\
& \quad = \int_{r_0}^{r_1} dx \, \bigg[|f'(x)|^2 - \f{|f(x)|^2}{4 x^2} 
- \f{|f(x)|^2}{4 x^2 [\ln(R/x)]^2}\bigg]     \lb{B.10} \\
& \qquad - \f{|f(x)|^2}{2x}\bigg|_{x = r_0}^{r_1} + \f{|f(x)|^2}{2x \ln(R/x)}\bigg|_{x = r_0}^{r_1} \geq 0,   \no \\
\begin{split} 
& \int_{r_0}^{r_1} dx \, |(\alpha_s f)(x)|^2 =  \int_{r_0}^{r_1} dx \, \bigg[|f'(x)|^2 
+ \big[s^2 - (1/4)\big] \f{|f(x)|^2}{x^2}\bigg]     \lb{B.11} \\
& \hspace*{3.25cm} - [s + (1/2)] \f{|f(x)|^2}{x}\bigg|_{x = r_0}^{r_1} \geq 0.    
\end{split} 
\end{align}
If $s = 0$, 
\begin{equation}
\int_0^{r_1} dx \, \f{|f(x)|^2}{x^2 [\ln(R/x)]^2} < \infty, \quad  
\lim_{x \downarrow 0} \f{|f(x)|}{[x \ln(R/x)]^{1/2}} = 0.    \lb{B.12} 
\end{equation}
If $s \in (0,\infty)$, then  
\begin{equation}
\int_0^{r_1} dx \, |f'(x)|^2 < \infty, \quad \int_0^{r_1} dx \, \f{|f(x)|^2}{x^2} < \infty, \quad 
\lim_{x \downarrow 0} \f{|f(x)|}{x^{1/2}} = 0,     \lb{B.13} 
\end{equation}
in particular, 
\begin{equation}
f \wti \chi_{[0,r_1/2]} \in H^1_0((0,r_1)),
\end{equation}
where
\begin{equation}
\wti \chi_{[0,r/2]} (x) = \begin{cases} 1, & x \in [0,r/4], \\
0, & x \in [3r/4,r], 
\end{cases} \quad \wti \chi_{[0,r/2]} \in C^{\infty}([0,r]), \; r \in (0,\infty). 
\end{equation}
\end{lemma}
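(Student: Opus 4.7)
\textbf{Proof plan for Lemma \ref{lB.1}.} The identities \eqref{B.10} and \eqref{B.11} are of algebraic/integration-by-parts character, and I would derive them first. For \eqref{B.11}, expand
\[
|(\alpha_s f)(x)|^2 = |f'(x)|^2 - \f{s+(1/2)}{x} \big(f'(x)\ol{f(x)} + \ol{f'(x)} f(x)\big) + \f{[s+(1/2)]^2}{x^2} |f(x)|^2,
\]
recognize the cross term as $-[(s+(1/2))/x](|f|^2)'$, and integrate it by parts on $(r_0,r_1)$. The combination $(s+(1/2))^2-(s+(1/2)) = s^2-(1/4)$ then produces the coefficient appearing in \eqref{B.11}. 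For \eqref{B.10}, set $h(x) := f(x)/(x^{1/2}[\ln(R/x)]^{1/2})$ and compute $h'$ by the product/chain rule; expanding $x\ln(R/x)|h'|^2$ algebraically gives three terms plus two cross terms. The cross terms reorganize as the total derivative of $(1/2)[x^{-1/2}f][\ln(R/x)]^{-1/2}\cdot f$, whose integration produces exactly the two boundary contributions on the right-hand side of \eqref{B.10}, while the squared terms yield $|f'|^2-|f|^2/(4x^2)-|f|^2/(4x^2[\ln(R/x)]^2)$. Nonnegativity of the left-hand side is automatic.

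To obtain \eqref{B.9}, I would drop the nonnegative left-hand side of \eqref{B.10} to get
\[
\int_{r_0}^{r_1}\!\!\big[|f'|^2-|f|^2/(4x^2)\big]\,dx \ge \f14\!\int_{r_0}^{r_1}\!\!\f{|f|^2}{x^2[\ln(R/x)]^2}\,dx + \f{|f|^2}{2x}\bigg|_{r_0}^{r_1} - \f{|f|^2}{2x\ln(R/x)}\bigg|_{r_0}^{r_1},
\]
add $s^2\int_{r_0}^{r_1}|f|^2/x^2\,dx$ to both sides, and use \eqref{B.11} to identify the left-hand side with $\int_{r_0}^{r_1}|\alpha_s f|^2\,dx + (s+(1/2))|f|^2/x|_{r_0}^{r_1}$. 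The arithmetic $(s+(1/2))-(1/2)=s$ yields exactly the boundary terms in \eqref{B.9}.

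For the limit conclusions I expect the main work. For \eqref{B.12} (case $s=0$): take $r_0\downarrow 0$ in \eqref{B.9}. The nonnegativity of both terms on the left-hand side and the $L^2$-hypothesis bound the right-hand side uniformly, delivering $\int_0^{r_1}|f|^2/(x^2[\ln(R/x)]^2)\,dx<\infty$ as well as boundedness of $|f(r_0)|^2/(r_0\ln(R/r_0))$. To upgrade boundedness to a vanishing limit I would use the identity $h^2(r_1)-h^2(r_0)=2\int_{r_0}^{r_1} h\, h'\,dx$ together with Cauchy--Schwarz:
\[
\big|h(r_1)^2-h(r_0)^2\big| \le 2 \bigg(\int_{r_0}^{r_1} \f{|f|^2}{x^2[\ln(R/x)]^2}\,dx\bigg)^{1/2} \bigg(\int_{r_0}^{r_1} x\ln(R/x)|h'|^2\,dx\bigg)^{1/2},
\]
both factors being finite as $r_0\downarrow 0$ by \eqref{B.10} and the finiteness already established. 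This shows $h(r_0)^2$ has a finite limit $L$; if $L>0$, then $|f|^2/(x^2[\ln(R/x)]^2)\sim L/(x\ln(R/x))$ near $0$, which is \emph{not} integrable, forcing $L=0$. For \eqref{B.13} (case $s\in(0,\infty)$) I would use the factorization trick: writing $f(x)=x^{s+(1/2)}u(x)$ gives $(\alpha_s f)(x)=x^{s+(1/2)}u'(x)$, so $\int_0^\pi x^{2s+1}|u'|^2\,dx<\infty$. For any $\varepsilon>0$, choose $\delta$ so small that $\int_0^\delta|\alpha_s f|^2<\varepsilon$; Cauchy--Schwarz gives
\[
|u(r_0)-u(\delta)| \le \bigg(\int_{r_0}^{\delta} x^{-(2s+1)}\,dx\bigg)^{1/2}\bigg(\int_{r_0}^{\delta} x^{2s+1}|u'|^2\,dx\bigg)^{1/2} \le r_0^{-s}\sqrt{\varepsilon/(2s)},
\]
whence $\limsup_{r_0\downarrow 0} r_0^{2s}|u(r_0)|^2 \le \varepsilon/(2s)$, and letting $\varepsilon\downarrow 0$ yields $\lim_{x\downarrow 0}|f(x)|/x^{1/2}=\lim_{x\downarrow 0} x^s u(x)=0$. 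The crucial consequence is that \emph{both} boundary terms $s|f(r_0)|^2/r_0$ and $|f(r_0)|^2/(2r_0\ln(R/r_0))$ in \eqref{B.9} tend to $0$, so passing $r_0\downarrow 0$ in \eqref{B.9} gives $s^2\int_0^{r_1}|f|^2/x^2\,dx<\infty$, and then \eqref{B.11} produces $\int_0^{r_1}|f'|^2\,dx<\infty$. The membership $f\wti\chi_{[0,r_1/2]}\in H^1_0((0,r_1))$ then follows by routine cutoff arguments since $f(x)\to 0$ as $x\downarrow 0$. The main obstacle is the transition from boundedness of the boundary expressions to their vanishing, which requires the sharp Cauchy--Schwarz coupling just described (respectively, in the $s=0$ case, the nonintegrability of $1/(x\ln(R/x))$).
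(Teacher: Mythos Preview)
Your proposal is correct. The derivation of \eqref{B.9}--\eqref{B.11} matches the paper's (which simply calls \eqref{B.10}, \eqref{B.11} ``straightforward identities'' and notes that together they yield \eqref{B.9}), and for \eqref{B.12} you reach the same conclusion via the same key point, the nonintegrability of $[x\ln(R/x)]^{-1}$ at $0$; your Cauchy--Schwarz step to show the limit exists is slightly more elaborate than the paper's route, which simply reads off existence of $\lim_{x\downarrow 0}|f(x)|^2/[x\ln(R/x)]$ from the \emph{equality} behind \eqref{B.9} (the identity you obtain before dropping the nonnegative left-hand side of \eqref{B.10}).

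For \eqref{B.13} you take a genuinely different route. The paper observes that for $s>0$ the boundary terms at $r_0$ in \eqref{B.9} all enter with a favorable sign, so one immediately obtains $\int_0^{r_1}|f|^2/x^2<\infty$; the relation $f'=\alpha_s f+(s+\tfrac12)f/x$ then gives $f'\in L^2$ directly, and finally existence of $\lim_{x\downarrow 0}|f(x)|^2/x$ (again from the equality underlying \eqref{B.9}) combined with $\liminf_{x\downarrow 0}|f(x)|^2/x=0$ (forced by $\int_0^{r_1}|f|^2/x^2<\infty$) yields the vanishing limit. You instead prove the limit \emph{first}, via the factorization $f=x^{s+1/2}u$ and a sharp Cauchy--Schwarz estimate on $u(r_0)-u(\delta)$, and then deduce the two integrability statements from \eqref{B.9} and \eqref{B.11}. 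Both arguments are valid; the paper's is shorter and runs in parallel with the $s=0$ case, while your factorization approach is more explicit about the mechanism and avoids needing the equality form of \eqref{B.9}.
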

\begin{proof}
Relations \eqref{B.10} and \eqref{B.11} are straightforward (yet somewhat tedious) identities; together 
they yield \eqref{B.9}. The 1st relation in \eqref{B.12} is an instant consequence of \eqref{B.9}, so is the fact that $\lim_{x \downarrow 0} |f(x)|^2 / [x \ln(R/x)]$ exists. Moreover, since $[x \ln(R/x)]^{-1}$ is not integrable at $x = 0$, the 1st relation in \eqref{B.12} yields 
$\liminf_{x \downarrow 0} |f(x)|^2 / [x \ln(R/x)] = 0$, implying the 2nd relation in \eqref{B.12}.  

Finally, if $s \in (0,\infty)$, then inequality \eqref{B.9} implies the 2nd relation in \eqref{B.13}; together with 
$\alpha_s f \in  L^2((0,\pi); dx)$, this yields the 1st relation in \eqref{B.13}. By inequality \eqref{B.9}, 
$\lim_{x \downarrow 0} |f(x)|^2 /x$ exists, but then the second relation in \eqref{B.13} yields 
$\liminf_{x \downarrow 0} |f(x)|^2 /x = 0$ and hence also $\lim_{x \downarrow 0} |f(x)|^2 /x = 0$. 
\end{proof}

We also recall the following elementary fact.

\begin{lemma} \lb{lB.2}
Suppose $f \in H^1((0,r))$ for some $r \in (0,\infty)$. Then, for all $x \in (0,r)$, 
\begin{align}
\begin{split} 
|f(x) - f(0)| &= \bigg|\int_0^x dt \, f'(t)\bigg| \leq x^{1/2} \bigg(\int_0^x dt \, |f'(t)|^2\bigg)^{1/2}    \\
& \leq x^{1/2} \|f'\|_{L^2((0,x);dt)} \underset{x \downarrow 0}{=} \oh\big(x^{1/2}\big).     \lb{B.16}
\end{split} 
\end{align} 
Thus, if $f \in H^1((0,r))$, then $\int_0^r dx \, |f(x)|^2/x^2 < \infty$ if and only if $f(0) = 0$, that is, if and only if $f \wti \chi_{[0,r/2]} \in H^1_0((0,r))$. 

In particular, if $f \in H^1((0,r))$ and $f(0) = 0$, then actually,  
\begin{equation}
\lim_{x \downarrow 0} \f{|f(x)|}{x^{1/2}} = 0.     \lb{B.17} 
\end{equation}
\end{lemma} 
\begin{proof}
Since \eqref{B.16} is obvious, we briefly discuss the remaining assertions in Lemma \ref{lB.2}. If 
$f \in H^1((0,r))$ and $\int_0^r dx \, |f(x)|^2/x^2 < \infty$ then identity \eqref{B.11} for $s < - 1/2$, that is, 
\begin{align}
\begin{split} 
& \int_{r_0}^{r_1} dx \, |(\alpha_s f)(x)|^2 =  \int_{r_0}^{r_1} dx \, \bigg[|f'(x)|^2 
+ \big[s^2 - (1/4)\big] \f{|f(x)|^2}{x^2}\bigg]     \lb{B.18} \\
& \hspace*{3.25cm} - [s + (1/2)] \f{|f(x)|^2}{x}\bigg|_{x = r_0}^{r_1} \geq 0, \quad s < - 1/2,      
\end{split} 
\end{align} 
yields the existence of $\lim_{x \downarrow 0} |f(x)|^2/x$. Since $\int_0^r dx \, |f(x)|^2/x^2 < \infty$ 
implies that  $\liminf_{x \downarrow 0} |f(x)|^2/x = 0$, one concludes that $\lim_{x \downarrow 0} |f(x)|^2/x = 0$ 
and hence $f$ behaves locally like an $H^1_0$-function in a right neighborhood of $x = 0$. Conversely, if 
$f(0) = 0$, then $\int_0^r dx \, |f(x)|^2/x^2 < \infty$ by Hardy's inequality as discussed in 
Remark \ref{r3.3}. Relation \eqref{B.17} is clear from \eqref{B.16} with $f(0)=0$. 
\end{proof}

\begin{remark} \lb{rB.3} 
$(i)$ If $f \in AC_{loc}((0,r))$ and $f' \in L^p((0,r); dx)$ for some $p \in [1,\infty)$, the H\"older estimate analogous to \eqref{B.16},
\begin{align}
\begin{split} 
|f(d) - f(c)| = \bigg|\int_c^d dt \, f'(t) \bigg| \leq |d-c|^{1/p'} \bigg(\int_c^d dt \, |f'(t)|^p\bigg)^{1/p}, \\ 
(c,d) \subset (0,r), \; \f{1}{p} + \f{1}{p'} = 1, 
\end{split} 
\end{align} 
implies the existence of $\lim_{c \downarrow 0} f(c) = f(0)$ and $\lim_{d \uparrow r} f(d) = f(r)$ and hence yields 
$f \in AC([0,r])$. \\[1mm] 
$(ii)$ The fact that $f \in H^1((0,r))$ and $\int_0^r dx \, |f(x)|^2/x^2 < \infty$ implies $f \wti \chi_{[0,r/2]} \in H^1_0((0,r))$ is a special case of a multi-dimensional result recorded, for instance, in 
\cite[Theorem~5.3.4]{EE18}. \\[1mm]
$(iii)$ When replacing $x^{-2}$, $x \in (0,r)$, by $[\sin(x)]^{-2}$, $x \in (0,\pi)$, due to locality, the considerations in Lemmas \ref{lB.1} and \ref{lB.2} at the left endpoint $x=0$ apply of course to the right interval endpoint $\pi$. 
${}$ \hfill $\diamond$ 
\end{remark}
 
\medskip

\noindent {\bf Acknowledgments.}
We are indebted to Jan Derezinski, Aleksey Kostenko, Ari Laptev, and Gerald Teschl for very helpful discussions and to Farit Avkhadiev for kindly pointing out to us references \cite{Av15} and \cite{Av21a}. 


\end{document}